\date{}
\begin{document}
\newtheorem{theorem}{Theorem}[section]
\newtheorem{mtheorem}[theorem]{Main Theorem}
\newtheorem{bbtheo}[theorem]{The $\bar\bschi$-Black Box}
\newtheorem{observation}[theorem]{Observation}
\newtheorem{proposition}[theorem]{Proposition}
\newtheorem{fprop}[theorem]{Freeness-Proposition}
\newtheorem{lemma}[theorem]{Lemma}
\newtheorem{testlemma}[theorem]{Test Lemma}
\newtheorem{mlemma}[theorem]{Main Lemma}
\newtheorem{note}[theorem]{}
\newtheorem{steplemma}[theorem]{Step Lemma}
\newtheorem{corollary}[theorem]{Corollary}
\newtheorem{notation}[theorem]{Notation}

\renewcommand{\labelenumi}{(\roman{enumi})}
\newcommand{\dach}[1]{\hat{\vphantom{#1}}}
\newcommand{\cp}{\widehat}
\newcommand{\dsum}{\bigoplus}

\newcommand{\pure}{\subseteq_\ast}

\def\Pf{\smallskip\goodbreak{\sl Proof. }}
\def\Fp{\vadjust{}\penalty200 \hfill
\lower.3333ex\hbox{\vbox{\hrule\hbox{\vrule\phantom{\vrule height
6.83333pt depth 1.94444pt width 8.77777pt}\vrule}\hrule}}
\ifmmode\let\next\relax\else\let\next\par\fi \next}

\def\Fin{\mathop{\rm Fin}\nolimits}
\def\br{\mathop{\rm br}\nolimits}
\def\fin{\mathop{\rm fin}\nolimits}
\def\Ann{\mathop{\rm Ann}\nolimits}
\def\Aut{\mathop{\rm Aut}\nolimits}
\def\End{\mathop{\rm End}\nolimits}
\def\bfb{\mathop{\rm\bf b}\nolimits}
\def\bfi{\mathop{\rm\bf i}\nolimits}
\def\bfj{\mathop{\rm\bf j}\nolimits}
\def\bM{\mathop{\rm\bf M}\nolimits}
\def\df{{\rm df}}
\def\bfk{\mathop{\rm\bf k}\nolimits}
\def\bEnd{\mathop{\rm\bf End}\nolimits}
\def\iso{\mathop{\rm Iso}\nolimits}
\def\id{\mathop{\rm id}\nolimits}
\def\Ext{\mathop{\rm Ext}\nolimits}
\def\Ines{\mathop{\rm Ines}\nolimits}
\def\Hom{\mathop{\rm Hom}\nolimits}
\def\Mon{\mathop{\rm Mon}\nolimits}
\def\bHom{\mathop{\rm\bf Hom}\nolimits}
\def\Rk{ R_\k-\mathop{\bf Mod}}
\def\Rn{ R_n-\mathop{\bf Mod}}
\def\map{\mathop{\rm map}\nolimits}
\def\ac{\mathop{\rm active }\nolimits}
\def\cd{\mathop{\rm cd}\nolimits}
\def\cf{\mathop{\rm cf}\nolimits}
\def\top{\mathop{\rm top}\nolimits}
\def\suc{\mathop{\rm suc}\nolimits}
\def\Ker{\mathop{\rm Ker}\nolimits}
\def\Bext{\mathop{\rm Bext}\nolimits}
\def\Br{\mathop{\rm Br}\nolimits}
\def\dom{\mathop{\rm Dom}\nolimits}
\def\min{\mathop{\rm min}\nolimits}
\def\im{\mathop{\rm Im}\nolimits}
\def\max{\mathop{\rm max}\nolimits}
\def\rk{\mathop{\rm rk}}
\def\Diam{\diamondsuit}
\def\Z{{\mathbb Z}}
\def\Q{{\mathbb Q}}
\def\N{{\mathbb N}}
\def\t{{\mathfrak t}}

\def\id{\mathop{\rm id}}

\makeatletter
\let\c@equation\c@theorem
\makeatother
\numberwithin{equation}{section}

\theoremstyle{definition}
\newtheorem{definition}[theorem]{Definition}
\newtheorem{example}[theorem]{Example}
\newtheorem{remark}[theorem]{Remark}


\def\bQ{{\bf Q}}
\def\bF{{\bf F}}
\def\bX{{\bf X}}
\def\bY{{\bf Y}}
\def\bHom{{\bf Hom}}
\def\bEnd{{\bf End}}
\def\bS{{\mathbb S}}
\def\AA{{\cal A}}
\def\BB{{\cal B}}
\def\CC{{\cal C}}
\def\DD{{\cal D}}
\def\TT{{\cal T}}
\def\FF{{\cal F}}
\def\GG{{\cal G}}
\def\PP{{\cal P}}
\def\SS{{\cal S}}
\def\R{{\cal R}}
\def\YY{{\cal Y}}
\def\fS{{\mathfrak S}}
\def\fH{{\mathfrak H}}
\def\fU{{\mathfrak U}}
\def\fW{{\mathfrak W}}
\def\fK{{\mathfrak K}}
\def\PT{{\mathfrak{PT}}}
\def\T{{\mathfrak{T}}}
\def\fX{{\mathfrak X}}
\def\fP{{\mathfrak P}}
\def\X{{\mathfrak X}}
\def\Y{{\mathfrak Y}}
\def\F{{\mathfrak F}}
\def\C{{\mathfrak C}}
\def\B{{\mathfrak B}}
\def\J{{\mathfrak J}}
\def\fN{{\mathfrak N}}
\def\fM{{\mathfrak M}}
\def\Fk{{\F_\k}}
\def\bar{\overline }
\def\Bbar{\bar B}
\def\Cbar{\bar C}
\def\Pbar{\bar P}
\def\etabar{{\bar \eta}}
\def\Tbar{\bar T}
\def\fbar{\bar f}
\def\nubar{{\bar \nu}}
\def\ubar{{\bar u}}
\def\vabar{{\bar \va}}
\def\vahat{{\hat \va}}
\def\psibar{{\bar \psi}}
\def\rhobar{\bar \rho}
\def\Abar{\bar A}
\def\a{\alpha}
\def\b{\beta}
\def\g{\gamma}
\def\w{\omega}
\def\e{\varepsilon}
\def\om{\omega}
\def\va{\varphi}
\def\k{\kappa}
\def\m{\mu}
\def\n{\nu}
\def\r{\rho}
\def\f{\phi}
\def\hv{\widehat\v}
\def\hF{\widehat F}
\def\v{\varphi}
\def\s{\sigma}
\def\l{\lambda}
\def\lo{\lambda^{\aln}}
\def\d{\delta}
\def\z{\zeta}
\def\th{\theta}
\def\ale{{\aleph_1}}
\def\aln{{\aleph_0}}
\def\al-n{{\aleph_n}}
\def\Cont{2^{\aln}}
\def\nld{{}^{ n \downarrow }\l}
\def\n+1d{{}^{ n+1 \downarrow }\l}
\def\hsupp#1{[[\,#1\,]]}
\def\size#1{\left|\,#1\,\right|}
\def\Binfhat{\widehat {B_{\infty}}}
\def\Zhat{\widehat \Z}
\def\Mhat{\widehat M}
\def\Rhat{\widehat R}
\def\Phat{\widehat P}
\def\Fhat{\widehat F}
\def\fhat{\widehat f}
\def\Ahat{\widehat A}
\def\Chat{\widehat C}
\def\Ghat{\widehat G}
\def\Bhat{\widehat B}
\def\Btilde{\widetilde B}
\def\Ftilde{\widetilde F}
\def\restl{\mathop{\upharpoonleft}}
\def\restr{\mathop{\upharpoonright}}
\def\to{\rightarrow}
\def\arr{\longrightarrow}
\def\bschi{\boldsymbol\chi}
\newcommand{\norm}[1]{\text{$\parallel\! #1 \!\parallel$}}
\newcommand{\supp}[1]{\text{$\left[ \, #1\, \right]$}}
\def\set#1{\left\{\,#1\,\right\}}
\newcommand{\mb}{\mathbf}
\newcommand{\wt}{\widetilde}
\newcommand{\card}[1]{\mbox{$\left| #1 \right|$}}
\newcommand{\union}{\bigcup}
\newcommand{\inters}{\bigcap}
\def\Proof{{\sl Proof.}\quad}
\def\fine{\ \black\vskip.4truecm}
\def\black{\ {\hbox{\vrule width 4pt height 4pt depth
0pt}}}
\def\fine{\ \black\vskip.4truecm}
\long\def\alert#1{\smallskip\line{\hskip\parindent\vrule%
\vbox{\advance\hsize-2\parindent\hrule\smallskip\parindent.4\parindent%
\narrower\noindent#1\smallskip\hrule}\vrule\hfill}\smallskip}

\def\map{\mathop{\rm map}\nolimits}
\def\ker{\mathop{\rm ker}\nolimits}
\def\im{\mathop{\rm im}}
\def\precdot{\mathop{\prec\!\!\!\cdot}\nolimits}
\def\colim{\mathop{\rm colim}}
\def\lim{\mathop{\rm lim}}
\def\ohom{\mathop{\rm \overline{Hom}}\nolimits}
\def\Hom{\mathop{\rm Hom}\nolimits}
\def\Rep{\mathop{\rm Rep}\nolimits}
\def\Aut{\mathop{\rm Aut}}
\def\id{\mathop{\rm id}\nolimits}
\def\Ast{\mathop{*}}
\def\vert{\mathop{\textsf{vert}}\nolimits}
\def\edge{\mathop{\textsf{edge}}\nolimits}
\def\aa{\mathbb{A}}
\def\g{\mathop{\mathcal Graphs}\nolimits}
\def\a{\mathop{\mathcal Ab}\nolimits}

\renewcommand{\theenumi}{\alph{enumi}}

\title{\sc An axiomatic construction of an almost full embedding
  of the category of graphs  into the category of $R$-objects}

\footnotetext{This paper is supported by the project No. I-963-98.6/2007 of
the German-Israeli Foundation for Scientific Research \& Development.
\\ AMS subject classification: primary:
13C05, 13C10, 13C13, 20K20, 20K25, 20K30; secondary: 03E05, 03E35.
Key words and phrases: embedding graphs, absolutely
rigid graphs, absolutely rigid modules, endomorphism rings \\ }

\author{R\"udiger G\"obel and Adam J. Prze\'zdziecki}

\date{}

\maketitle

\begin{abstract}
We construct embeddings $G$ of the category of graphs into categories of $R$-modules over a commutative ring $R$ which are almost full in the sense that the maps induced by the functoriality of $G$
$$
  R[\Hom_{{\mathcal G}raphs}(X,Y)]\longrightarrow\Hom_R(GX, GY)
$$
are isomorphisms. The symbol $R[S]$ above denotes the free $R$-module with the basis $S$. This implies that, for any cotorsion-free ring $R$, the categories of $R$-modules are not less complicated than the category of graphs. A similar embedding of graphs into the category of vector spaces with four distinguished subspaces (over any field, e.g. $\mathbb{F}_2=\{0,1\}$) is obtained.
\end{abstract}

\section{Introduction}\label{sec1}
For all commutative, cotorsion-free rings $R$ we construct embeddings $G$ of the category of graphs into categories of $R$-modules which are almost full in the sense that the maps induced by the functoriality of $G$
\begin{equation}\label{equation-introduction}
  R[\Hom_{{\mathcal G}raphs}(X,Y)]\longrightarrow\Hom_R(GX, GY)
\end{equation}
are isomorphisms. The symbol $R[S]$ above denotes the free $R$-module with the basis $S$. This notation is chosen so as to correspond to the one used for group rings -- when the category has only one object $*$ and $\Hom(*,*)$ is the group, then $\mathbb{Z}[\Hom(*,*)]$ is the group ring. A similar embedding of graphs into the category of vector spaces with four distinguished subspaces (over any field, e.g. $\mathbb{F}_2=\{0,1\}$) is derived in Corollary~\ref{corollary-r4}. The rings act on modules from the right and the functions act on their arguments from the right. The graphs are always directed, without multiple edges.

The functor $G$ is obtained as a $\kappa$-directed colimit. We need the $\aleph_1$-directedness in order to (\ref{equation-introduction}) be an isomorphism. The higher directedness may be used to obtain additional properties of the values of $G$ like $\aleph_n$-freeness, see Corollary~\ref{corollary-stronger-embedding}.

Some parts of the construction depend on the ring $R$ while other parts do not. The ring independent part of the proof consists of Section~\ref{Axi}, where we formulate axioms, and of Section~\ref{Axiproof}, which derives the existence of the embedding from these axioms (Main Theorem \ref{axThm}). For technical reasons it is convenient to formulate the axioms in a more general setting of $R$-objects. The ring dependent part, Section \ref{section-applications}, consists on a verification of the axioms in several categories of $R$-objects.

The category of graphs is, up to set theoretic considerations, universal among the concrete categories, that is every other concrete category fully embeds into it \cite{trnkova-book}. In fact, most interesting concrete categories are accessible and therefore fully embed into $\g$ without any set theoretic difficulties \cite{adamek-rosicky}. This motivated a research aimed at finding which categories admit a subcategory isomorphic to $\g$. These include, for example, the category of semigroups -- Hedrl\'in and Lambek \cite{semigroups}, the category of integral domains -- Fried and Sichler \cite{integral}. Many interesting categories, though, have no such subcategories for trivial reasons like the existence of constant maps in the category of spaces, conjugation or addition in the category of groups or abelian groups. As a result this research was naturally extended into almost full embeddings. There exist full embeddings of $\g$, up to constant maps, into metric spaces -- Trnkov\'a \cite{trnkova-metric}, paracompact spaces -- Koubek \cite{paracompact}; up to null-homotopic maps -- into the unpointed homotopy category and up to trivial homomorphisms and conjugation in the target -- into the category of groups \cite{przezdziecki-groups}. An almost full embedding, in the sense adopted in this paper, into the category of abelian groups (that is $\mathbb{Z}$-modules) was recently constructed in \cite{P}, our work is a wide generalization of that result.

The term {\em almost full embedding} was introduced in the seventies of the past century by Koubek \cite{paracompact}, who gave a convenient name (and approach) to a concept introduced two years earlier by Trnkov\'a \cite{trnkova-metric}. Their definition stated that an embedding was almost full if it was full up to constant morphisms (the target category was assumed to be concrete). Since the ``Prague school'' used this term only when the target category was some subcategory of Topological Spaces -- it was clear that the ``metadefinition'' was: {\em an embedding is almost full if it is full up to ``obvious sacrifices'' imposed by the target category}. In this broader sense the term was used in \cite{przezdziecki-groups} and \cite{P}. In this paper the target category is $R$-Modules, possibly with some additional structure. Therefore the metadefinition specifies to: {\em an embedding into $R$-Mod is almost full if every morphism in the target is an $R$-linear combination of morphisms coming from the source}.

\section{The axioms}\label{Axi}

\def\rc{R\mathcal{C}}

Here we list the axioms which will be used in Section~\ref{Axiproof} to prove Main Theorem~\ref{axThm}. The list is divided into two groups: Axioms of type A describe a category of $R$-objects $\rc$. A similar one is called a pre-module $R$-category in \cite{GM_endomorphism}. Axioms of type B describe certain object $M$, which we assume exists in $\rc$.

{\em Convention.} Throughout the paper:
\begin{itemize}
  \item $R$ is a commutative ring with $1\neq 0$
  \item $\kappa$ is a fixed cardinal of uncountable cofinality
\end{itemize}

\noindent The description of $\rc$:

\begin{itemize}
  \item[A1] $\rc$ is an $R$-category: for $A,B\in\rc$ the $\Hom_R(A,B)$ have a natural structure of an $R$-module and the composition is bilinear.
  \item[A2] $\rc$ is preabelian (hence kernels and cokernels exist).
  \item[A3] Arbitrary direct sums (hence colimits) exist in $\rc$.
  \item[A4] $\rc$ is concrete in such a way that:
  \begin{itemize}
    \item[(i)] The underlying set functor preserves directed colimits.
    \item[(ii)] Monomorphisms in $\rc$ are one-to-one on the underlying sets.
  \end{itemize}
\end{itemize}

The categories $\rc$, satisfying the axioms A1 -- A4, which we consider in this paper are: the category of $R$-modules and the category $R_4$-{\bf Mod} of $R$-modules with $4$ distinguished submodules, see Definition~\ref{definition-r-4-mod}.

Let $\Gamma$ denote a full subcategory of the category of graphs containing one representative for each isomorphism class of graphs of cardinality less than $\kappa$. Let $A$ be a free $R$-module whose basis consists of the morphisms of $\Gamma$ and $1$. We equip $A$ with a structure of an $R$-algebra by declaring that the basis elements multiply by composition or, if not composable, their product is zero. This multiplication is extended to all of $A$ by $R$-linearity, the elements of $R$ commute with the elements of $\Gamma$. In particular morphisms of $\Gamma$ may be viewed as elements of the algebra $A$.

There exists an object $M$ in $\rc$ such that:
\begin{itemize}
  \item[B1] $A\cong\Hom_R(M,M)$, in particular we may view $M$ as a right $A$-object.
  \item[B2] $A\subseteq M$ as the $A$-objects, and for all $\varphi\in\Gamma$ we have $A\varphi=A\cap M\varphi$ ($\Gamma$-purity).
  \item[B3] If $\varphi:C\arr D$ a monomorphism in $\Gamma$, then the composition $M\id_C\subseteq M\overset{\varphi}{\longrightarrow}M$ is a monomorphism.
\end{itemize}

\begin{remark}\label{remark-faithful-restriction}
  Since $A$ has the identity element, Axioms B1 and B2 imply that two $R$-endomorphisms of $M$ coincide if their restrictions to $A$ coincide.
\end{remark}

\section{The axiomatic proof of the almost full embedding}\label{Axiproof}

This section proves Main Theorem~\ref{axThm}. It is an adaptation of \cite[Section 3]{P} to the axiomatic setting, introduced in Section~\ref{Axi} above. Following a convention adopted in most of the related literature (\cite{adamek-rosicky}, \cite{trnkova-book}, and others), a {\em graph} is a directed graph, that is a set $X$ endowed with a binary relation $E\subseteq X\times X$. A morphism of graphs is a function $f:X\to X'$ preserving the relation. The members of $X$ are called {\em vertices} and the members of $E$ are called {\em edges}.

\begin{remark}\label{remark-m-splits} Let $\id_X:X\arr X$ be the identity map. If $X$ is in $\Gamma$, then $\id_X$ is an idempotent of $A\cong\Hom_R(M,M)$. Since we work in an additive category with kernels, \cite[Proposition 18.5]{mitchell} implies that we have $M\cong M\id_X\oplus M(1-\id_X)$.
\end{remark}

\begin{definition} \label{definition-g}
\begin{itemize}
  \item[(a)]
  For $X$ in $\Gamma$ we define $GX=M\id_X$. If $\varphi:X\arr Y$ is a map in $\Gamma$, then $\varphi\id_Y=\varphi$, hence
  $M\id_X\varphi\subseteq M\id_Y$, and therefore $\varphi$ induces, via right multiplication, an $R$-homomorphism $GX\arr GY$. Thus $G$ is a functor from $\Gamma$ to the category $\rc$.
  \item[(b)]
  If $X$ is an arbitrary graph, then we define
  $$
    GX=\colim_{c\in\Gamma\downarrow X}Gc
  $$
  where $\Gamma\downarrow X$ is the category of maps $c:C\arr X$ with $C$ in $\Gamma$ and $Gc$ is defined as $GC$.
  We may view this as an extension of (a) since for $X$ in $\Gamma$ the category $\Gamma\downarrow X$ contains a terminal object $\id_X$.
\end{itemize}
\end{definition}

\begin{remark} \label{remark-colimit} For an arbitrary graph $X$ we have
  \begin{equation}\label{equation-gx-def}
    GX=\colim_{C\in[X]^{<\kappa}}GC.
  \end{equation}
  Where $[X]^{<\kappa}$ denotes the category of inclusions whose objects are subgraphs $C\subseteq X$ whose cardinality is less than $\kappa$.
  This is clear since $[X]^{<\kappa}$ is isomorphic to a cofinal subcategory of $\Gamma\downarrow X$.
  A map $f: X\arr Y$ induces, by taking images, a map $[X]^{<\kappa}\arr[Y]^{<\kappa}$,
  which in turn induces a map $Gf:GX\arr GY$ of colimits.
\end{remark}

\begin{remark} The main reason we need to include infinite subgraphs in $[X]^{<\kappa}$ is that we want this poset to be countably directed to apply Lemma~\ref{lemma-limits-zs} in the proof of Theorem~\ref{theorem-embedding-graphs}, hence the assumption that $\kappa>\aleph_0$. All statements of this section except Lemma~\ref{lemma-limits-zs} and Theorem~\ref{theorem-embedding-graphs} hold when we take $\kappa=\aleph_0$. Considering $\kappa>\aleph_1$ may be useful to obtain additional properties of the values of the functor $G$, see Corollary~\ref{corollary-stronger-embedding}.
\end{remark}

\begin{remark} \label{remark-gamma-x}
  Let $X$ be in $\Gamma$. Let $\Gamma_X$ denote the set of maps $\sigma:C\arr X$ in $\Gamma$ whose target is $X$. It is a subset of the morphisms of $\Gamma$ hence a subset of the basis of $A$ viewed as an $R$-module. In the decomposition $M\cong M\id_X\oplus M(1-\id_X)$, noted in Remark~\ref{remark-m-splits}, we have, by Axiom B2, $\Gamma_X\subseteq M\id_X\cong GX$, and the complement of $\Gamma_X$ in ${\rm Mor}\,\Gamma$ is naturally bijective to a subset of $M(1-\id_X)$.
\end{remark}

\begin{lemma}\label{lemma-injective}
  If $\varphi:X\arr Y$ is one-to-one, then so is $G\varphi:GX\arr GY$.
\end{lemma}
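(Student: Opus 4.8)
The plan is to reduce the statement to graphs of cardinality $<\kappa$, where Axiom B3 applies directly, and then to transport injectivity through the directed colimits that define $GX$ and $GY$. First I would settle the base case: \emph{if $\psi\colon C\arr D$ is a one-to-one map between graphs of cardinality $<\kappa$, then $G\psi\colon GC\arr GD$ is one-to-one on underlying sets.} Such a $\psi$ is a monomorphism in $\Gamma$, so by Axiom B3 the composite $M\id_C\subseteq M\overset{\psi}{\arr}M$ is a monomorphism. This composite is precisely $G\psi$ followed by the split inclusion $GD=M\id_D\subseteq M$ of Remark~\ref{remark-m-splits}; since a morphism whose composite with a further morphism is a monomorphism is itself a monomorphism, $G\psi$ is a monomorphism in $\rc$, and then Axiom A4(ii) gives that $G\psi$ is one-to-one on underlying sets. (Axiom B3 is stated for $\Gamma$, but $C$ and $D$ are isomorphic to objects of $\Gamma$ and $G$ is a functor, so the conclusion is isomorphism-invariant and applies to all graphs of size $<\kappa$.)

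Next I would pass to the colimit. By Remark~\ref{remark-colimit} we have $GX=\colim_{C\in[X]^{<\kappa}}GC$ and $GY=\colim_{D\in[Y]^{<\kappa}}GD$, both indexed by directed posets (the union of two subgraphs of size $<\kappa$ again has size $<\kappa$ since $\kappa$ is infinite). By Axiom A4(i) the underlying-set functor turns these into directed colimits of sets, so every element of $GX$ is represented by some $\bar a\in GC$ for a $C\in[X]^{<\kappa}$, and an equality in $GY$ of two elements coming from a common stage is already witnessed at some later stage. Moreover $G\varphi$ is the colimit of the maps $G(\varphi\restr C)\colon GC\arr G\varphi(C)$, where $\varphi(C)\in[Y]^{<\kappa}$ is the image subgraph and $\varphi\restr C$ is one-to-one because $\varphi$ is.

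To prove injectivity, I would take $a,b\in GX$ with $G\varphi(a)=G\varphi(b)$. Using directedness I may represent both by elements $\bar a,\bar b\in GC$ of a single stage $C\in[X]^{<\kappa}$. Their images $G(\varphi\restr C)(\bar a)$ and $G(\varphi\restr C)(\bar b)$ in $G\varphi(C)$ become equal in $GY$, hence already equal in $GD$ for some $D\in[Y]^{<\kappa}$ with $\varphi(C)\subseteq D$. The crucial observation is that the composite $C\overset{\varphi\restr C}{\arr}\varphi(C)\hookrightarrow D$ is a one-to-one map between graphs of cardinality $<\kappa$, so by the base case $G$ of it is one-to-one on underlying sets; by functoriality this map equals $GC\arr G\varphi(C)\arr GD$. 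From the equality in $GD$ I then conclude $\bar a=\bar b$, whence $a=b$.

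The one point demanding care — and which I regard as the main obstacle — is exactly this last transfer: the stage $D$ witnessing the equality lives over $Y$ and need not be of the form $\varphi(C')$, so one cannot directly invoke injectivity of each $G(\varphi\restr{C'})$ in the image system. The device that dissolves the difficulty is to \emph{compose} $\varphi\restr C$ with the inclusion $\varphi(C)\hookrightarrow D$ and apply Axiom B3 to the single one-to-one map $C\arr D$ between small graphs, rather than attempting to run the colimit comparison through the image subsystem. Everything else is the routine bookkeeping of directed colimits of sets, legitimised by Axiom A4.
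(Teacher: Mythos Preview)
Your proof is correct and follows essentially the same strategy as the paper: first settle the small-graph case via Axiom B3 (and A4(ii)), then propagate injectivity through the directed colimit using Axiom A4(i). The only cosmetic difference is that the paper, having first observed that the bonding maps $GC\hookrightarrow GC'$ are injective, treats $GX$ as a union $\bigcup GC$ and uses that $\varphi\restr C:C\to C\varphi$ is a graph \emph{isomorphism} (so $GC\to G(C\varphi)\subseteq GY$ is an isomorphism) rather than passing to a larger witness $D$; your detour through $D$ is perfectly valid but unnecessary once one notes this.
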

\begin{proof}
  The case when $\varphi$ is in $\Gamma$ follows immediately from Axiom B3 and the definition of $GX$.

  Since $[X]^{<\kappa}$ is a category of inclusions, the previous paragraph implies that (\ref{equation-gx-def}) is a directed colimit of inclusions and therefore, by Axiom A4, we have $GC\subseteq GX$ for every $C\subseteq X$ with $|C|<\kappa$. For any $x\neq y$ in $GX$, there exists a $C\subseteq X$, $|C|<\kappa$ such than $x$ and $y$ belong to $GC$. Then $GC$ is mapped isomorphically to $G(C\varphi)$ and, since analogously $G(C\varphi)\subseteq GY$, we obtain $(x)G\varphi\neq (y)G\varphi$.
\end{proof}

\begin{lemma}\label{lemma-right-multiplication}
  Every $R$-homomorphism $h:GX\arr GY$ with $X$ and $Y$ in $\Gamma$ can be uniquely represented as right multiplication by an $a\in A$. The element $a$ is of the form $a=\sum_{i\in I}\sigma_ik_i$, where $k_i$ are nonzero elements of $R$, $\sigma_i:X\arr Y$ are distinct maps in $\Gamma$, and $I$ is finite.
\end{lemma}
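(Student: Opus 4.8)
The plan is to reduce an arbitrary $R$-homomorphism $h\colon GX\to GY$ to an endomorphism of $M$ and then invoke Axiom B1. Recall $GX=M\id_X$ and $GY=M\id_Y$, and that Remark~\ref{remark-m-splits} gives $M\cong M\id_X\oplus M(1-\id_X)$. First I would form the canonical extension $\hat h\colon M\to M$ of $h$, namely the composite of the projection $M\to M\id_X$ (right multiplication by $\id_X$), the map $h\colon M\id_X\to M\id_Y$, and the inclusion $M\id_Y\hookrightarrow M$. By construction $\hat h$ annihilates $M(1-\id_X)$ and restricts to $h$ on $M\id_X$. By Axiom B1 there is a unique $a_0\in A$ such that $\hat h$ is right multiplication by $a_0$.

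Next I would normalise by setting $a=\id_X\,a_0\,\id_Y$. For $m\in M\id_X$ we have $m\id_X=m$ and $(m)h\in M\id_Y$, so $ma=((m)\hat h)\id_Y=((m)h)\id_Y=(m)h$; thus $a$ also represents $h$ on $GX$. To read off the form of $a$, write $a_0$ in the free basis of $A$ as a finite $R$-combination of morphisms of $\Gamma$ together with $1$. Using that basis elements multiply by composition and that the product of non-composable morphisms vanishes, the operator $\id_X(-)\id_Y$ kills every basis morphism whose source is not $X$ or whose target is not $Y$, fixes every morphism $\sigma\colon X\to Y$, and sends the unit $1$ to $\id_X\id_Y$ (which is $0$ if $X\neq Y$, and the $\Gamma$-morphism $\id_X$ if $X=Y$). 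Discarding zero coefficients and consolidating yields $a=\sum_{i\in I}\sigma_i k_i$ with $I$ finite, the $\sigma_i\colon X\to Y$ distinct maps in $\Gamma$, and $k_i\in R$ nonzero, as claimed.

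For uniqueness, suppose $a'=\sum_{i}\sigma_i' k_i'$ is any element of the stated form (so each $\sigma_i'\colon X\to Y$) whose right multiplication equals $h$ on $GX$. Since each $\sigma_i'$ has source $X$ we get $\id_X a'=a'$, hence right multiplication by $a'$ annihilates $M(1-\id_X)$ and equals $h$ on $M\id_X$; therefore it coincides with $\hat h$ on all of $M$. Axiom B1 then forces $a'=a_0$, and applying $\id_X(-)\id_Y$ gives $a'=\id_X a_0\id_Y=a$. Alternatively, evaluating right multiplication at $\id_X\in A\subseteq M$, which lies in $GX$, and using that by Axiom B2 the $A$-action on $M$ restricts to multiplication in $A$, gives $a=\id_X a=\id_X a'=a'$ directly.

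I expect the main care to lie in the bookkeeping of the right-action and composition conventions: one must verify that restricting the $A$-action along $A\subseteq M$ agrees with multiplication in $A$, and that the composition convention makes $\id_X\sigma\id_Y=\sigma$ exactly for $\sigma\colon X\to Y$. Once these are pinned down, both the determination of the form of $a$ and the uniqueness are essentially forced by the freeness of $A$ over its basis of $\Gamma$-morphisms.
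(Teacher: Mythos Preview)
Your proof is correct and follows essentially the same approach as the paper: you extend $h$ to an endomorphism $\hat h$ of $M$ by precomposing with the projection $(-)\id_X$ and postcomposing with the inclusion $GY\subseteq M$, invoke Axiom B1 to obtain an element of $A$, and then use the idempotent sandwich $\id_X(-)\id_Y$ to force every surviving basis morphism to have source $X$ and target $Y$. The paper's proof does exactly this (its $\alpha$ is your $\hat h$), only it phrases the sandwich step as ``since $\alpha'=\id_X\alpha'\id_Y$'' after restricting to $A$ via Remark~\ref{remark-faithful-restriction}, and it leaves uniqueness implicit, whereas you spell it out (your alternative argument, evaluating at $\id_X\in A\subseteq M$, is precisely the content of that remark).
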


\begin{proof}
  Let $\alpha:M\arr M$ be the composition $M\stackrel{(-)\cdot\id_X}{-\!\!\!\longrightarrow}GX\stackrel{h}{\longrightarrow}GY\subseteq M$.
  Axiom B1 implies that $(x)\alpha=xa$ for all $x\in M$ and some $a$ in $A$.
  By the definition of $A$ we have a unique representation $a=\sum_{i\in I}\sigma_ik_i$ where $k_i\in R$, $\sigma_i:X_i\arr Y_i$ are distinct maps in $\Gamma$ (or the identity), and $I$ is finite.
  Remark~\ref{remark-faithful-restriction} allows us to restrict $\alpha$ to $\alpha':A\arr A$.
  Since $\alpha'=\id_X\alpha'\id_Y$ we see that $X_i=X$ and $Y_i=Y$ for $i\in I$, and the $\sigma_i$ above can not be equal to the identity of $A$.
\end{proof}

The following two lemmas are tautological thanks to Axioms B1 and B2.

\begin{lemma} \label{lemma-dense}
  For any $R$-homomorphism $h:GX\arr GY$ with $X$ and $Y$ in $\Gamma$, if $(\id_X)h=\{0\}$, then $h=0$.
\end{lemma}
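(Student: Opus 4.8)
The plan is to read off everything from the representation established in Lemma~\ref{lemma-right-multiplication}, so that this statement becomes, as announced, essentially tautological. First I would invoke that lemma to write $h$ as right multiplication by a unique $a\in A$, recording in addition that $a=\id_X a\,\id_Y$. This last identity is exactly the content of the equation $\alpha'=\id_X\alpha'\id_Y$ used at the end of the proof of Lemma~\ref{lemma-right-multiplication}; it guarantees that every basis element $\sigma_i$ occurring in $a=\sum_{i\in I}\sigma_i k_i$ has source $X$ and target $Y$.

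Next I would observe that $\id_X$ is itself an element of $GX$ on which $h$ may be evaluated. Indeed, by Axiom B2 we have $A\subseteq M$, and since $\id_X$ is idempotent we get $\id_X=\id_X\id_X\in M\id_X=GX$. Evaluating $h$ on this element gives $(\id_X)h=\id_X a$, because on $GX$ the map $h$ agrees with right multiplication by $a$: for every $x\in GX=M\id_X$ we have $x\id_X=x$, so the composite $M\xrightarrow{(-)\id_X}GX\xrightarrow{h}M$ from the proof of Lemma~\ref{lemma-right-multiplication} restricts on $GX$ to $h$ itself.

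The one small computation is that left multiplication by $\id_X$ fixes $a$: using $a=\id_X a\,\id_Y$ together with the idempotency of $\id_X$, we obtain $\id_X a=\id_X(\id_X a\,\id_Y)=\id_X a\,\id_Y=a$. Hence $(\id_X)h=a$, and the hypothesis $(\id_X)h=\{0\}$ forces $a=0$ and therefore $h=0$. I expect no genuine obstacle here; the only point that requires a moment of care is verifying that the $\sigma_i$ making up $a$ are truly left-absorbed by $\id_X$, which is precisely the reason the normal form $a=\id_X a\,\id_Y$ was worth recording in Lemma~\ref{lemma-right-multiplication}.
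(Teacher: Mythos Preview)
Your argument is correct and follows essentially the same route as the paper: invoke Lemma~\ref{lemma-right-multiplication} to realize $h$ as right multiplication by $a$, evaluate at $\id_X\in GX$, and use $\id_X a=a$ to conclude $a=0$. The only cosmetic difference is that the paper unpacks $a=\sum_i\sigma_ik_i$ and uses $\id_X\sigma_i=\sigma_i$ together with the linear independence of the $\sigma_i$ (via Axiom~B2), whereas you obtain $\id_X a=a$ directly from the normalized form $a=\id_X a\,\id_Y$; the content is the same.
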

\begin{proof}
   Lemma~\ref{lemma-right-multiplication} yields an
   $a=\sum_{i\in I}\sigma_ik_i$ such that $(x)h=xa$ where $x\in GX$, and $\sigma_i:X\arr Y$ are distinct maps in $\Gamma$.
   Then $(\id_X)h=\id_X\sum \sigma_ik_i=\sum \sigma_ik_i\in GY$.
   The assumption that $(\id_X)h=0$ and Axiom B2 imply $k_i=0$ for all $i$ and therefore $a=0$ and $h=0$.
\end{proof}

\begin{lemma}\label{lemma-factors}
  Let $X,Y,W$ be graphs in $\Gamma$, $\varphi:X\arr Y$ a monomorphism and $h:GW\arr GY$ an $R$-homomorphism.
  If $(\id_W)h\in(GX)G\varphi$, then $h$ factors through $(GX)G\varphi$, i.e. there exists an $\tilde{h}:GW\arr GX$ with $\tilde{h}(G\varphi)=h$.
\end{lemma}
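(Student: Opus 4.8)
The plan is to push the entire computation down into the algebra $A$, where it becomes a statement about the free basis, and to reach $A$ by means of $\Gamma$-purity. First I would invoke Lemma~\ref{lemma-right-multiplication} to write $h$ as right multiplication by an element $a=\sum_{i\in I}\sigma_i k_i\in A$, with the $\sigma_i:W\arr Y$ distinct maps of $\Gamma$ and the $k_i\in R$ nonzero. Since $\id_W\sigma_i=\sigma_i$ for each $i$, the image $(\id_W)h$ equals $\id_W a=a$, so the hypothesis $(\id_W)h\in(GX)G\varphi$ becomes $a\in(GX)G\varphi$. As $GX=M\id_X$ and $G\varphi$ is right multiplication by $\varphi$, we have $(GX)G\varphi=(M\id_X)\varphi=M\varphi$, so the hypothesis reads precisely $a\in M\varphi$.

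Next I would apply $\Gamma$-purity. The element $a$ lies in $A$, being an $R$-combination of basis morphisms, and by the previous paragraph it also lies in $M\varphi$; hence Axiom B2 gives $a\in A\cap M\varphi=A\varphi$, so $a=b\varphi$ for some $b\in A$. Expanding $b$ in the free basis $\{1\}\cup{\rm Mor}\,\Gamma$ and multiplying on the right by $\varphi$, the multiplication rule of $A$ shows that every nonzero summand of $b\varphi$ is either $\varphi$ itself (coming from the unit $1$ or from $\id_X$) or a composite $\mu\varphi$ with $\mu$ a map of $\Gamma$ whose target is $X$. Comparing this with $a=\sum_i\sigma_i k_i$ and using that distinct basis elements are $R$-linearly independent, each $\sigma_i$, which carries the nonzero coefficient $k_i$, must coincide with one of these summands. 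Consequently every $\sigma_i$ factors as $\sigma_i=\tau_i\varphi$ for some $\tau_i:W\arr X$ in $\Gamma$ (taking $\tau_i=\id_X$ in the degenerate case $\sigma_i=\varphi$).

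Finally I would assemble the factorization. Setting $b'=\sum_{i\in I}\tau_i k_i\in A$, and noting that each $\tau_i:W\arr X$, right multiplication by $b'$ carries $GW=M\id_W$ into $M\id_X=GX$ and thus defines an $R$-homomorphism $\tilde h:GW\arr GX$. Then $\tilde h(G\varphi)$ is right multiplication by $b'\varphi=\sum_i\tau_i\varphi\,k_i=\sum_i\sigma_i k_i=a$, which is exactly $h$, giving $\tilde h(G\varphi)=h$ as required.

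The step I expect to need the most care is the coefficient bookkeeping in the second paragraph: one must check that matching $b\varphi$ against $a$ really does yield a factorization of each individual $\sigma_i$ through $\varphi$, and that this is not spoiled by cancellation among several maps $\mu$ having the same composite $\mu\varphi$. This is controlled exactly by the freeness of $A$: the coefficient of $\sigma_i$ is the same nonzero $k_i$ on both sides, so the set of $\mu$ with $\mu\varphi=\sigma_i$ cannot be empty. It is this freeness, together with Axioms B1 and B2, that makes the lemma ``tautological''. I also note that this argument never uses the monomorphism hypothesis on $\varphi$, since only the existence, not the uniqueness, of $\tilde h$ is asserted.
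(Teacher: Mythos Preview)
Your argument is correct and follows the same outline as the paper's: invoke Lemma~\ref{lemma-right-multiplication} to write $h$ as right multiplication by $a=\sum_i\sigma_ik_i\in A$, use $\Gamma$-purity (Axiom~B2) to place $a$ in $A\varphi$, and then read off the factorization from the free basis of $A$. The only genuine difference is in how the passage through B2 is organised. The paper first chooses $u\in GX$ with $u\varphi=a$, applies B2 to get $u\varphi\in A\varphi$, and then uses the injectivity of $G\varphi$ (coming from the monomorphism hypothesis via Lemma~\ref{lemma-injective}) to conclude $u\in A$; matching $u=\sum_j\tau_jn_j$ against $a$ then gives a \emph{bijection} between the $\sigma_i$'s and the $\tau_j$'s, which also feeds the uniqueness statement of Remark~\ref{remark-factors}. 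Your version instead observes directly that $(GX)G\varphi=M\varphi$, applies B2 to $a$ itself, and then builds $b'=\sum_i\tau_ik_i$ by choosing, for each $\sigma_i$, any $\tau_i$ with $\tau_i\varphi=\sigma_i$. This yields only existence of $\tilde h$, but, as you note, it never uses that $\varphi$ is a monomorphism; the hypothesis is needed only for the uniqueness recorded in Remark~\ref{remark-factors}.
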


\begin{remark}\label{remark-factors}
  From the proof we obtain $\tau_j:W\arr X$ in $\Gamma$ and $k_j\in R$ such that
  $h=\sum k_j\tau_j\varphi$ and $\tilde{h}=\sum k_j\tau_j$. The lemma can be illustrated by the following diagram
  $$
  \xymatrix{
    {\{\id_W\}} \ar[r]
        \ar@{}[d]|(0.44){\rotatebox{-90}{$\subseteq$}} &
      GX \ar[d]^{G\varphi} \\
    GW \ar[r]^h \ar@{-->}[ur]^{\tilde{h}} &
      GY
  }
  $$
\end{remark}

\begin{proof}
  There exists $u\in GX$ such that $(\id_W)h=(u)G\varphi$.
  Lemma~\ref{lemma-right-multiplication} implies that $(\id_W)h=\sum \sigma_ik_i\in GY$ for some $k_i\in R$ and maps $\sigma_i: W\arr Y$ in $\Gamma$.
  In particular we have $u\varphi=(u)G\varphi=\sum\sigma_ik_i\in GY\cap A$. The injectivity of $G\varphi$, which we have by Lemma~\ref{lemma-injective}, and Axiom B2 implies that $u\in GX\cap A$, thus $u$ is of the form $\sum\tau_jn_j$ for some $n_j\in R$ and maps $\tau_j$ in $\Gamma$. We obtain
  $$\sum_{i\in I} \sigma_ik_i=\sum_{j\in J}\tau_j\varphi  n_j$$
  and therefore, by $R$-linear independence, we have a bijection $t:I\arr J$ such that $k_i=n_{(i)t}$ and $\sigma_i=\tau_{(i)t}\varphi$. Since $\varphi$ is a monomorphism the $\tau_{(i)t}$'s are uniquely determined by $\sigma_i$'s and both triangles in Diagram \ref{remark-factors} commute.
\end{proof}

\begin{lemma}\label{lemma-factorization-id}
   Let $h:GX\arr GY$ be an $R$-homomorphism with $|X|<\kappa$. If $C\subseteq Y$ is a subgraph such that $|C|<\kappa$ and $(\id_X)h\in GC\subseteq GY$, then $h$ factors through $GC\subseteq GY$.
\end{lemma}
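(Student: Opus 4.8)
The plan is to reduce the statement, in which $Y$ is an arbitrary graph, to the already available Lemma~\ref{lemma-factors}, which treats graphs lying in $\Gamma$. Concretely, I would first produce a subgraph $D\subseteq Y$ with $C\subseteq D$ and $|D|<\kappa$ (so that $D$, like $C$ and $X$, is in $\Gamma$ up to isomorphism) through which the whole map factors: an $R$-homomorphism $h_D:GX\arr GD$ with $h=h_D\,\iota_D$, where $\iota_D:GD\arr GY$ is the colimit injection. Granting such a $D$, the hypothesis transports cleanly: since $\iota_D$ is a monomorphism (Lemma~\ref{lemma-injective}) and $(\id_X)h=\bigl((\id_X)h_D\bigr)\iota_D$ lies in the image of $GC$, injectivity forces $(\id_X)h_D\in GC$.

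The main obstacle is precisely the existence of this factoring stage $D$. By Remark~\ref{remark-colimit} we have $GY=\colim_{D\in[Y]^{<\kappa}}GD$, and because $\kappa>\aln$ this colimit is countably directed; its transition maps $G\iota:GD\arr GD'$ (for $D\subseteq D'$) are monomorphisms by Lemma~\ref{lemma-injective}, and Axiom A4(i) identifies the underlying set of $GY$ with the directed union of the underlying sets of the $GD$. One cannot simply invoke that the image of $h$ is small, because $GX=M\id_X$ is merely a retract of $M$ and need not be $\kappa$-presentable (the axioms put no bound on $M$). Instead I would show that $h$ is determined by, and supported at the same stage as, the single element $(\id_X)h$. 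The intended tool is the explicit description in the small case: for $D\in\Gamma$ every $R$-homomorphism $GX\arr GD$ is right multiplication by an element of $\id_X A\id_D$ (Lemma~\ref{lemma-right-multiplication}), while the transition maps of the colimit are right multiplication by inclusions; combining this with the $\Gamma$-purity of Axiom B2 controls where $(\id_X)h$ lives, and the uncountable cofinality of $\kappa$ (so that a countable union of subgraphs of size $<\kappa$ is again of size $<\kappa$) lets one collect the relevant data into one small $D$. This support argument, replacing abstract presentability, is the technical heart of the proof, and is the step I expect to be genuinely hard; it must be carried out by hand, since the fully refined version of this factorization is essentially equivalent to the $\Hom$-computation of the Main Theorem and so cannot be cited here.

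Once the factorization $h=h_D\,\iota_D$ is in hand, with $X$, $C$, $D$ all in $\Gamma$, the conclusion follows by applying Lemma~\ref{lemma-factors} with its $(W,X,Y,\varphi)$ instantiated as $(X,C,D,\iota)$, where $\iota:C\arr D$ is the monic inclusion. The hypothesis $(\id_X)h_D\in(GC)G\iota=GC$ is exactly what was verified above, so Lemma~\ref{lemma-factors} yields $\tilde h:GX\arr GC$ with $\tilde h\,(G\iota)=h_D$. Composing with $\iota_D$ and using the cocone identity $(G\iota)\,\iota_D=\iota_C$ gives $h=\tilde h\,\iota_C$, that is, $h$ factors through $GC\subseteq GY$, as required. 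This final step is a direct citation once the codomain has been shrunk to a graph in $\Gamma$.
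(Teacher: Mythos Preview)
Your proposal correctly isolates the main difficulty---finding a single small $D\subseteq Y$ through which $h$ factors---but then does not actually overcome it. You concede that this ``support argument'' is ``the step I expect to be genuinely hard'' and that its refined form is essentially Corollary~\ref{corollary-factorization}; but Corollary~\ref{corollary-factorization} is a \emph{consequence} of the present lemma, so you cannot invoke it, and the tools you list (Lemma~\ref{lemma-right-multiplication}, Axiom~B2, uncountable cofinality of $\kappa$) do not by themselves produce a factorization of $h$ through some $GD$: Lemma~\ref{lemma-right-multiplication} only applies once both source and target are already in $\Gamma$, which is precisely what is at issue. As written, the proposal assumes the crux and then cleanly dispatches the remainder.

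The paper's proof sidesteps this circularity with a trick you did not anticipate. It does \emph{not} first find one small $D$ absorbing all of $h$. Instead, for each small $D$ with $C\subseteq D\subseteq Y$ it passes to the complete graph $D_*$ on the vertex set of $D$; since $D_*$ admits every map, the inclusion $D\subseteq D_*$ extends to a graph map $\delta:Y\arr D_*$, and $G\delta$ gives a genuine $R$-homomorphism $GY\arr GD_*$. Composing yields $h'=h\cdot G\delta:GX\arr GD_*$, now a map between objects in $\Gamma$, to which Lemma~\ref{lemma-factors} applies and produces $\tilde h':GX\arr GC$. Lemma~\ref{lemma-dense} shows $\tilde h'$ is determined by its value on $\id_X$ and hence independent of the choice of $D$; finally, for any $x\in GX$ one enlarges $D$ so that $(x)h\in GD$ and checks $(x)h=(x)\tilde h'$ there. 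Thus the factorization through $GC$ is built directly, without ever needing a single small stage capturing the whole image of $h$.
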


\begin{proof}
  Fix a subgraph $D$ such that $C\subseteq D\subseteq Y$ and $|D|<\kappa$. Let $D_*$ be the full graph with the same vertices as $D$ and all possible edges.  Thus there is some extension $\delta:Y\arr D_*$ of the inclusion $D\subseteq D_*$ to $Y$. We may put these into the following diagram.
  $$
    \xymatrix{
      {\{\id_X\}} \ar[r]
          \ar@{}[d]|{\rotatebox{-90}{$\subseteq$}} &
        GC \ar@{}[r]|{\rotatebox{0}{$\subseteq$}} &
        GD \ar@{}[r]|{\rotatebox{0}{$\subseteq$}}
          \ar[dr]|{\rotatebox{-45}{$\subseteq$}}   &  
        GY  \ar[d]^{G\delta} \\
      GX \ar[rrr]_{h'} \ar[rrru]_h \ar@{-->}[ur]^{\tilde{h'}} &&&
        GD_*
    }
  $$
  The top line, the left vertical inclusion and the $h$ are given by assumptions. The right vertical map $G\delta :GY\arr GD_*$ is induced by $\delta$. The homomorphism $h'$ is the composition of $h$ and this extension $G\delta$. Thus $((\id_X)h)G\delta\in(GD)G\delta$.

  Lemma~\ref{lemma-factors} applied to $h'$, $X$ as $W$ and the inclusion $C\subseteq D_*$ as the monomorphism $\varphi$, gives us the dashed homomorphism $\tilde{h'}$. The central trapezoid commutes. Thus $h'$ factors through $\tilde{h}'$. Lemma~\ref{lemma-dense} implies that $\tilde{h}'$ does not depend on the choice of $D$: it is uniquely determined by its value on $\id_X$. If $h\neq\tilde{h}'$, then for some $x\in GX$ we have $(x)h\neq(x)\tilde{h}'$. Axiom A4(i) allows us to choose $D$ so that $(x)h\in GD$ and we obtain a contradiction as $(x)h=(x)\tilde{h}'$ in $GD\subseteq GD_*$.
\end{proof}

We will need the following immediate consequence of this Lemma.

\begin{corollary}\label{corollary-factorization}
  If $h:GX\arr GY$ is an $R$-homomorphism with $\size{X} <\kappa$, then there exists a  subgraph $C\subseteq Y$ of size $\size{C} < \kappa$ such that $h$ factors through $GC\subseteq GY$.
\end{corollary}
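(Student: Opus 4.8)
The plan is to derive this corollary directly from Lemma~\ref{lemma-factorization-id} by exhibiting a subgraph $C\subseteq Y$ of size less than $\kappa$ that contains enough of $Y$ to capture the image of $\id_X$ under $h$. The point is that the element $(\id_X)h$ lives in $GY$, which by Remark~\ref{remark-colimit} is the directed colimit $\colim_{C'\in[Y]^{<\kappa}}GC'$, and Axiom A4(i) tells us that the underlying set functor preserves directed colimits.

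First I would observe that $(\id_X)h$ is a single element of the underlying set of $GY$. Since $GY=\colim_{C'\in[Y]^{<\kappa}}GC'$ is a directed colimit and the underlying set functor preserves directed colimits (Axiom A4(i)), every element of $GY$ arises as the image of some element in one of the $GC'$. Concretely, there exists a subgraph $C\subseteq Y$ with $\size{C}<\kappa$ such that $(\id_X)h\in GC\subseteq GY$, where the inclusion $GC\subseteq GY$ is the one coming from the colimit (this inclusion is legitimate because, as noted in the proof of Lemma~\ref{lemma-injective}, the colimit is a directed colimit of inclusions, so $GC\subseteq GY$ for every $C\subseteq Y$ with $\size{C}<\kappa$).

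Once such a $C$ is fixed, the hypotheses of Lemma~\ref{lemma-factorization-id} are met: we have $\size{X}<\kappa$, the subgraph $C\subseteq Y$ satisfies $\size{C}<\kappa$, and $(\id_X)h\in GC\subseteq GY$ by the choice of $C$. Applying Lemma~\ref{lemma-factorization-id} then immediately yields that $h$ factors through the inclusion $GC\subseteq GY$, which is exactly the assertion of the corollary.

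Since this is a straightforward specialization of the preceding lemma, there is no real obstacle. The only point requiring a moment's care is the justification that a single element of the colimit $GY$ already appears at a stage $GC$ of bounded size $\size{C}<\kappa$; this is precisely what Axiom A4(i) guarantees, together with the fact that $[Y]^{<\kappa}$ is a directed poset of inclusions. Everything else is a direct invocation of Lemma~\ref{lemma-factorization-id}.
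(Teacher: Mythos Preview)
Your argument is correct and matches the paper's intended approach: the paper simply states that this corollary is an immediate consequence of Lemma~\ref{lemma-factorization-id}, and your use of Axiom~A4(i) to locate $(\id_X)h$ in some $GC$ with $|C|<\kappa$ is precisely the missing step that makes this immediate.
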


Functoriality of $G$ gives us a natural $R$-homomorphism $$\gamma:R[\Hom_{\g}(X,Y)]\arr\Hom_R(GX,GY)\ (\sum r_\va \va\mapsto \sum r_\va G\va ),$$
where $\va\in \Hom_{\g}(X,Y)$ and $r_\va\in R$.

\begin{remark} \label{remark-colimits}
  Lemma~\ref{lemma-right-multiplication} implies
  that $\gamma$ is an isomorphism when both $X$ and $Y$ have size $<\kappa$. Axiom A4(i) and Corollary~\ref{corollary-factorization} imply that it is enough that $X$ has size $<\kappa$ since then
$$
  R[\Hom(X,Y)]\cong R[\Hom(X,\colim_{C\in[Y]}C)]\cong
  \colim_{C\in [Y]}R[\Hom(X,C)]
  \overset{\colim\gamma}{\underset{\cong}{-\!\!\!-\!\!\!-\!\!\!\longrightarrow}}
$$

$$
  \arr\colim_{C\in[Y]}\Hom_R(GX,GC)
  \overset{\cong}{\longrightarrow}\Hom_R(GX, \colim_{C\in[Y]}GC)\cong
  \Hom_R(GX,GY).
$$
The last arrow being an isomorphism is equivalent to Corollary~\ref{corollary-factorization}.
\end{remark}

\begin{lemma}
\label{lemma-limits-zs}
  Let $\{S_i\}_{i\in I}$ be a diagram of sets. Let
  $\lambda:R[\lim S_i]\arr\lim R[S_i]$ be defined by the universal property of limits. If $I$ is codirected, then $\lambda$ is one-to-one and if $I$ is countably codirected, then $\lambda$ is an isomorphism.
\end{lemma}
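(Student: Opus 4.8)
The plan is to describe both modules concretely and then treat the two assertions in turn, using plain codirectedness for injectivity and the full strength of countable codirectedness for surjectivity. An element of $R[\lim S_i]$ is a finite sum $\sum_n r_n x_n$ with $r_n\in R$ and $x_n\in\lim S_i$, whereas an element of $\lim R[S_i]$ is a compatible family $(\xi_i)_i$ with $\xi_i\in R[S_i]$ and $R[S_f](\xi_i)=\xi_j$ for every transition map $f\colon i\arr j$. Writing $\pi_i\colon\lim S_i\arr S_i$ for the projections, $\lambda$ sends $\sum_n r_n x_n$ to the family $\bigl(\sum_n r_n\pi_i(x_n)\bigr)_i$.

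For injectivity I would start with a nonzero element $\sum_{n=1}^N r_n x_n$ normalised so that the $x_n\in\lim S_i$ are pairwise distinct and all $r_n\neq 0$. For each pair $m\neq n$ some index separates $x_m$ and $x_n$ under its projection, and as there are only finitely many pairs, codirectedness provides a single index $i_0$ lying below all of them; by functoriality $\pi_{i_0}$ then separates all the $x_n$ at once. Hence the $\pi_{i_0}(x_n)$ are pairwise distinct basis elements of the free module $R[S_{i_0}]$, so the $i_0$-component $\sum_n r_n\pi_{i_0}(x_n)$ of $\lambda(\sum_n r_n x_n)$ is nonzero by $R$-linear independence. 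Thus $\lambda$ is one-to-one, and only finite codirectedness was used.

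For surjectivity fix $(\xi_i)\in\lim R[S_i]$. The crucial point is that the support sizes $|\mathrm{supp}(\xi_i)|$ are bounded. Indeed, each transition map $f\colon i\arr j$ gives $\xi_j=R[S_f](\xi_i)$, whence $\mathrm{supp}(\xi_j)\subseteq S_f(\mathrm{supp}(\xi_i))$ and $|\mathrm{supp}(\xi_j)|\le|\mathrm{supp}(\xi_i)|$; so if the support sizes were unbounded I could choose indices $i_1,i_2,\dots$ with $|\mathrm{supp}(\xi_{i_k})|\ge k$, take a common lower bound $j$ of the countable set $\{i_k\}$ --- this is exactly where countable codirectedness is indispensable --- and conclude $k\le|\mathrm{supp}(\xi_{i_k})|\le|\mathrm{supp}(\xi_j)|<\infty$ for all $k$, a contradiction. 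Let $N$ be the maximal support size, attained at an index $j_0$, and write $\xi_{j_0}=\sum_{m=1}^N c_m t_m$ with distinct $t_m\in S_{j_0}$ and $c_m\neq 0$.

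It then remains to lift $t_1,\dots,t_N$ to points of $\lim S_i$. For an arbitrary index $i$, finite codirectedness yields a common lower bound $j$ of $i$ and $j_0$; since $S_{j\to j_0}$ can only decrease support size, $N=|\mathrm{supp}(\xi_{j_0})|\le|\mathrm{supp}(\xi_j)|\le N$, so $|\mathrm{supp}(\xi_j)|=N$ and $S_{j\to j_0}$ must restrict to a coefficient-preserving bijection $\mathrm{supp}(\xi_j)\arr\mathrm{supp}(\xi_{j_0})$, any merging or cancellation being ruled out by equality of the sizes. I thereby obtain unique $s^{(j)}_1,\dots,s^{(j)}_N\in\mathrm{supp}(\xi_j)$ with $S_{j\to j_0}(s^{(j)}_m)=t_m$ and $\xi_j=\sum_m c_m s^{(j)}_m$, and I define $x_m\in\lim S_i$ by $\pi_i(x_m):=S_{j\to i}(s^{(j)}_m)$. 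The step I expect to be the real obstacle is verifying that this definition is independent of the auxiliary $j$ and genuinely lands in the limit: passing to a further common lower bound of two candidate choices of $j$ and using that the transition maps biject the equal-size supports onto one another should give both well-definedness and compatibility with all transition maps. Granting this, evaluating at a common lower bound $j$ of $i$ and $j_0$ gives $\sum_m c_m\pi_i(x_m)=R[S_{j\to i}]\bigl(\sum_m c_m s^{(j)}_m\bigr)=R[S_{j\to i}](\xi_j)=\xi_i$, so $\lambda(\sum_m c_m x_m)=(\xi_i)$ and $\lambda$ is onto. Together with the first part this shows $\lambda$ is an isomorphism.
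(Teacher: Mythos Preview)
Your proof is correct and follows essentially the same approach as the paper's: both parts hinge on finding an index where the support projects injectively (for injectivity) and on bounding the support sizes via countable codirectedness, then lifting the maximal support to the limit (for surjectivity). The only cosmetic difference is that the paper passes at once to the coinitial subset $\{i\mid i<j_0\}$ and observes $\lim_{i<j_0}S_i=\lim S_i$, whereas you work with auxiliary common lower bounds of $i$ and $j_0$; your sketch of the well-definedness check via a further common lower bound is exactly right and makes explicit what the paper leaves implicit.
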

\begin{proof}
  This is \cite[Lemma 3.13]{P}, for convenience of the reader we repeat the short proof.

  Let $0\ne a=\sum k_ss\in R[S]$ be an element of a free $R$-module $R[S]$ with some basis $S$. Let $|a|$ denote the {\em support} of $a$, consisting of those $s$ for which $k_s\neq 0$. If $a\in R[\lim S_i]$ then, since $I$ is codirected, there exists an $i\in I$ such that $|a|$ projects injectively into $S_i$ and therefore $(a)\lambda\neq 0$.

  Let $a=(a_i)\in\lim R[S_i]$. If there exists a sequence $i_n$, $n\in\mathbb{N}$, such that the supremum of the cardinalities of $|a_{i_n}|$ is $\omega$ then, since $I$ is countably codirected, there exists $S_{i_0}$ which maps to all $S_{i_n}$; but then $|a_{i_0}|$ must be infinite, a contradiction. Let $i_m\in I$ be such that $|a_{i_m}|$ is largest possible. Then for each $i<i_m$ the map $S_i\arr S_{i_m}$ restricts to a bijection $|a_i|\arr|a_{i_m}|$, hence the inclusions $|a_i|\subseteq S_i$ lift in a coherent way to
  $\lim_{i<i_m} S_i$. Since the set $\{i\mid i<i_m\}$ is coinitial in $I$, we have $\lim_{i<i_m}S_i=\lim S_i$, and therefore $a$ is in the image of $\lambda$.
\end{proof}

\begin{mtheorem}\label{axThm} \label{theorem-embedding-graphs}
  If a category $\rc$ satisfies Axioms A1--A4 and contains an object that satisfies B1--B3, then there exists a functor $G$ from the category of graphs to $\rc$ which induces natural isomorphisms
  $$
  \gamma:R[\Hom_{\g}(X,Y)]\overset{\cong}{\longrightarrow}\Hom_R(GX,GY)
  $$
\end{mtheorem}

\begin{proof}
  We have a chain of isomorphisms
  $$
  R[\Hom(X,Y)]\cong R[\Hom(\colim_{C\in[X]^{<\kappa}}C,Y)]\cong
  R[\lim_{C\in[X]^{<\kappa}}\Hom(C,Y)]{\overset{\lambda}{\longrightarrow}}
  $$
  $$
  \arr\lim_{C\in[X]^{<\kappa}}R[\Hom(C,Y)]{\overset{\lim\gamma}{-\!\!\!\longrightarrow}}
  \lim_{C\in[X]^{<\kappa}}\Hom_R(GC,GY)\cong
  $$
  $$
  \cong\Hom_R(\colim_{C\in[X]^{<\kappa}}GC,GY)\cong\Hom_R(GX,GY).
  $$
  Lemma~\ref{lemma-limits-zs} implies that $\lambda$ is an isomorphism, Remark~\ref{remark-colimits} implies that $\lim\gamma$ is an isomorphism as the limit of isomorphisms, and the last isomorphism follows from the definition of $GX$.
\end{proof}

\begin{corollary}
  If $R$ is a cotorsion-free ring with $1\neq 0$, then there exists a functor $G$ from the category of graphs to the category of $R$-modules which induces natural isomorphisms
  $$
  \gamma:R[\Hom_{\g}(X,Y)]\overset{\cong}{\longrightarrow}\Hom_R(GX,GY)
  $$
\end{corollary}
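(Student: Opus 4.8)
The plan is to apply Main Theorem~\ref{axThm} with $\rc$ taken to be the category of $R$-modules. The first task, verifying Axioms A1--A4, is routine and uses no hypothesis on $R$: the category $R$-Mod is an $R$-linear abelian category, so A1 and A2 hold; arbitrary direct sums and hence all colimits exist, giving A3; and the forgetful functor to $\mathop{\mathcal Ab}$ (and then to sets) preserves directed colimits and creates the injectivity of monomorphisms, giving A4(i) and A4(ii). With these in hand the entire weight of the corollary rests on producing a single object $M$ of $R$-Mod satisfying B1--B3.

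This is exactly where cotorsion-freeness of $R$ enters. The algebra $A$ attached to $\Gamma$ is, by its very construction, \emph{free} as an $R$-module; since a direct sum of cotorsion-free abelian groups is again cotorsion-free, over a cotorsion-free ring $R$ the algebra $A$ is a cotorsion-free $R$-algebra. The strategy is then to realize $A$ as $\End_R(M)$ for a suitable $A$-module $M$ that contains the free module $A$, by means of a realization theorem of Corner--G\"obel type (as developed in \cite{GM_endomorphism}). Concretely one builds $M$ as an $A$-module extension of $A$ by adjoining branch-type elements indexed along a tree, chosen so that any $R$-endomorphism of $M$ is forced, on support grounds, to be right multiplication by an element of $A$; this yields Axiom B1, together with the identification of $M$ as a right $A$-object and $A\subseteq M$ required by B2.

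The genuine obstacle is that a bare realization of the endomorphism algebra is not enough: Axioms B2 and B3 must be arranged \emph{simultaneously} with B1, and they concern the internal structure of $M$ rather than only $\End_R(M)$. For B2 one needs $A$ to sit inside $M$ not merely as a submodule but $\Gamma$-purely, so that $A\varphi = A\cap M\varphi$ for every $\varphi\in\Gamma$; for B3 one needs every monomorphism $\varphi$ of $\Gamma$ to act on $M$ by an \emph{injective} right multiplication. Both are secured by controlling the adjoined elements so that the quotient $M/A$ is $\aleph_1$-free (and, for larger $\k$, correspondingly $\k$-free), which keeps the relevant restrictions pure and injective. The combinatorial bookkeeping that kills unwanted endomorphisms while preserving purity and injectivity is precisely what a Black Box argument in the spirit of the $\bar\bschi$-Black Box provides, and this verification is carried out in Section~\ref{section-applications}. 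Once such an $M$ is exhibited, the corollary is immediate from Main Theorem~\ref{axThm}.
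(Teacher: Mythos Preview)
Your overall architecture matches the paper's: take $\rc=R\text{-Mod}$, note that A1--A4 are immediate, and reduce everything to producing an $M$ satisfying B1--B3 via the realization theorems in Section~\ref{section-applications}. Indeed the paper's own proof of this corollary is a two-line pointer to Corollary~\ref{corollary-result-black-box} (and, for $|R|<2^{\aleph_0}$, to Corollary~\ref{corollary-result-corner}).

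Where your sketch drifts from the paper is in the mechanism you propose for B2 and B3. You attribute both to arranging that $M/A$ be $\aleph_1$-free; that is not how the paper proceeds, and it is not clear that $\aleph_1$-freeness of $M/A$ alone would yield $\Gamma$-purity $A\varphi=A\cap M\varphi$. In the paper, $M$ is sandwiched as $A\subseteq M\subseteq\widehat{A}$ (respectively $F\subseteq M\subseteq\widehat{F}$ in the Black Box case, with $A$ an $A$-retract of $F$), and B2 is checked directly in Remark~\ref{remark-a-purity} by writing an element of $M$ as an $\bS$-convergent sum in the free basis of $A$ and comparing coefficients. Axiom B3 is obtained because the $\bS$-completion preserves monomorphisms (Corner case), or because right multiplication by a monomorphism $\varphi\in\Gamma$ extends canonically from the free module $F$ to $M\subseteq\widehat{F}$ and remains injective there (Black Box case). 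Also, the Corner--G\"obel realization you invoke is \cite{CG} (see Theorem~\ref{cotfreethm} and \cite[Vol.~2]{GT}), not \cite{GM_endomorphism}, which treats peak $I$-spaces. Since you ultimately defer to Section~\ref{section-applications} for the verification, these are inaccuracies in the commentary rather than a gap in the logic, but you should replace the $\aleph_1$-freeness heuristic with the actual completion/sandwich argument.
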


\begin{proof}
  This follows from Corollary~\ref{corollary-result-black-box}.
  If $|R|<2^{\aleph_0}$, then we may use Corollary~\ref{corollary-result-corner} instead.
\end{proof}

\subsection{No countable representation exist}

Our construction of the functor $G$ starts with the category $\Gamma$ whose cardinality is at least the continuum. Its size is inherited by the values of the functor so that the cardinality of $GX$ is always at least the continuum, even when $X$ is empty. Unfortunately one can not avoid this.

Let $R$ be a countable, cotorsion-free ring. Then free $R$-modules $F$ are  slender but the product $R^\omega$ of $\omega$ copies of $R$ is not. In particular $F$ contains no submodules isomorphic to $R^\omega$.

\begin{lemma}\label{BS-group}
  If $A$ and $B$ are torsion-free countable $R$-modules and $H=\Hom_R(A,B)$ is uncountable, then $H$ contains a copy of $R^\omega$.
\end{lemma}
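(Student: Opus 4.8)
The plan is to exploit the fact that, although $A$ need not be finitely generated, it is the union of an increasing chain of finitely generated submodules, so that $H$ becomes an inverse limit of \emph{countable} modules. The uncountability of $H$ then forces this inverse system to drop properly infinitely often, and I will assemble these drops into an explicit copy of $R^\omega$, using torsion-freeness only at the very end to guarantee injectivity.

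First I would set up the tower. Enumerate $A=\{a_0,a_1,\dots\}$ and put $A_n=\langle a_0,\dots,a_{n-1}\rangle$, so that $A=\union_n A_n$. Since a homomorphism out of $A_n$ is determined by the images $(a_0)h,\dots,(a_{n-1})h$ of the generators, restriction embeds $\Hom_R(A_n,B)$ into $B^n$, so each $\Hom_R(A_n,B)$ is countable. Restriction along $A_n\subseteq A$ gives $R$-linear maps $\rho_n\colon H\to\Hom_R(A_n,B)$ with $H=\varprojlim_n\Hom_R(A_n,B)$. Writing $K_n=\Ker\rho_n=\{h\in H: h|_{A_n}=0\}$, we obtain a descending chain $K_0\supseteq K_1\supseteq\cdots$ with $\inters_n K_n=0$ (a homomorphism vanishing on every $A_n$ vanishes on $A$) and with each $H/K_n$ embedding into the countable module $\Hom_R(A_n,B)$. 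If only finitely many of the inclusions $K_{n+1}\subseteq K_n$ were proper, then $K_n=\inters_m K_m=0$ for some $n$, so $\rho_n$ would embed $H$ into a countable module, contradicting the uncountability of $H$. Hence $K_n\supsetneq K_{n+1}$ for infinitely many $n$.

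Next I would build the map. Choose a strictly increasing sequence $m_0<m_1<\cdots$ of such indices, and for each $n$ pick $g_n\in K_{m_n}\setminus K_{m_n+1}$ together with a witness $b_n\in A_{m_n+1}$ with $(b_n)g_n\neq 0$ (so automatically $b_n\notin A_{m_n}$). Define $\Phi\colon R^\omega\to H$ by $\Phi\big((r_n)_n\big)=\sum_n r_n g_n$. The crucial point is that this infinite sum is a genuine element of $H$: restricted to any $A_\ell$ all but finitely many summands vanish, since $g_n\in K_{m_n}\subseteq K_\ell$ as soon as $m_n\geq\ell$; thus $\sum_n r_n g_n|_{A_\ell}$ is a finite $R$-linear combination of homomorphisms $A_\ell\to B$, these partial restrictions are coherent, and they assemble into a homomorphism $A\to B$. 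Clearly $\Phi$ is $R$-linear. This is exactly where the product $R^\omega$, rather than the free module $R^{(\omega)}$, is produced: infinite coefficient sequences are admissible precisely because the supports of the $g_n$ recede to infinity.

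Finally I would prove injectivity, which is the main obstacle and the only place the hypotheses on $B$ enter. Suppose $\Phi\big((r_n)\big)=0$ and argue by induction on $n$: assuming $r_0=\dots=r_{n-1}=0$, evaluate the relation $\sum_k r_k g_k=0$ at $b_n$. The terms with $k<n$ vanish because $r_k=0$, and the terms with $k>n$ vanish because $b_n\in A_{m_n+1}\subseteq A_{m_k}$ while $g_k\in K_{m_k}$, so $(b_n)g_k=0$. This leaves $r_n\,(b_n)g_n=0$ in $B$ with $(b_n)g_n\neq 0$, whence $r_n=0$ since $B$ is torsion-free, and $\Phi$ embeds $R^\omega$ into $H$. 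The genuinely delicate step is this last cancellation: one must ensure the witness $(b_n)g_n$ is not annihilated by a nonzero scalar, so that the triangular evaluation really recovers each coordinate; arranging the $g_n$ so that the drop levels $m_n$ increase while the witnesses remain faithful, and extracting the needed cancellation from the torsion-freeness of $B$, is the heart of the argument, with the countability of each $\Hom_R(A_n,B)$ (used in the first paragraph) providing the branching that makes it available.
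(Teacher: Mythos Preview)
Your proof is correct and follows essentially the same approach as the paper: both define the descending chain $H_n=\{f\in H:(a_i)f=0\text{ for }i\le n\}$ (your $K_n$), observe that it cannot stabilize at $0$ without making $H$ countable, pick $f_k\in H_{n_k}\setminus H_{n_{k+1}}$ along a strictly decreasing subsequence, form $\Phi((s_k))=\sum_k s_kf_k$ using the receding supports to make the infinite sum converge, and then verify injectivity by evaluating at a witness to recover the least nonzero coefficient via torsion-freeness of $B$. The only cosmetic difference is that the paper argues injectivity by looking at the minimal nonzero index rather than by induction, which is the same thing.
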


\begin{proof}
  Let $\{a_1,a_2,\ldots\}$ be a set of generators for $A$.
  Let $H_n=\{f\in H\mid (a_i)f=0 \mbox{ for } i=1,2,\ldots, n\}$.
  Clearly $\bigcap H_n=0$. If there exists an $n$ such that $H_n=0$, then each $f$ in $H$ is uniquely determined by its values on $a_1,a_2,\ldots,a_n$ hence $H$ is countable which is a contradiction.

  Therefore there exists an infinite subsequence $H_{n_1}\supsetneq H_{n_2}\supsetneq\ldots\supsetneq H_{n_k}\supsetneq\ldots$. Choose a sequence of homomorphisms $f_k\in H_{n_k}\setminus H_{n_{k+1}}$. Then every sequence $s=(s_k)_{k<\omega}$ in $R^\omega$ defines a homomorphism $f_s=\sum_k f_ks_k$, since the sum is finite on every $a_n$. If $k$ is the least index such that $s_k\neq 0$, then $(a_i)f_k\neq 0$ for some $n_k<i\leq n_{k+1}$, hence $(a_i)f_s=(a_i)f_ks_k\neq 0$. Therefore $f_s\neq 0$ for a nonzero sequence $s$, and therefore we obtain a submodule $\prod_{k < \omega}R f_k\subseteq H$.
\end{proof}

\begin{corollary}
  If $G$ is an almost full embedding of a category of graphs into a category of torsion-free $R$-modules and $\Hom_R(X,Y)$ is uncountable, then either $GX$ or $GY$ has to be uncountable.
\end{corollary}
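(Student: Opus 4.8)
The plan is to argue by contradiction, assuming that both $GX$ and $GY$ are countable. The key is to extract two incompatible facts about the single module $H:=\Hom_R(GX,GY)$. First, since $G$ is almost full, the functoriality map $\gamma:R[\Hom_{\g}(X,Y)]\to H$ is an isomorphism, so $H$ is a \emph{free} $R$-module with basis $\Hom_{\g}(X,Y)$; as this basis is uncountable and $R\neq0$, the module $H$ is uncountable.

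Next I would feed this into Lemma~\ref{BS-group}. The modules $GX$ and $GY$ are torsion-free, being objects of the target category, and countable by our contradiction hypothesis, while $H=\Hom_R(GX,GY)$ has just been shown to be uncountable. Taking $A=GX$ and $B=GY$, the lemma therefore produces a submodule of $H$ isomorphic to $R^\omega$.

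This closes the argument. By the fact recalled at the opening of this subsection --- over a countable cotorsion-free ring $R$ every free $R$-module is slender, and a slender module contains no submodule isomorphic to $R^\omega$ --- the free module $H$ can contain no such copy, contradicting the previous paragraph. Hence the assumption was false and at least one of $GX,GY$ is uncountable.

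I anticipate no computational difficulty; the whole argument is the collision of two opposing facts about the single module $\Hom_R(GX,GY)$. The only point deserving care, and the closest thing to an obstacle, is the recognition that almost fullness already pins $H$ down as \emph{free} (hence $R^\omega$-free), so that Lemma~\ref{BS-group} forces a contradiction unless one of the two graph-images is already uncountable. Everything else is a direct citation of Lemma~\ref{BS-group} and of the stated slenderness property.
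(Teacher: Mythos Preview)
Your proposal is correct and follows essentially the same route as the paper: assume both $GX$ and $GY$ countable, note that almost fullness forces $\Hom_R(GX,GY)$ to be free, invoke Lemma~\ref{BS-group} to embed $R^\omega$ into it, and contradict the fact (recorded just before the lemma) that free modules over a countable cotorsion-free ring contain no copy of $R^\omega$. The paper's argument is the same in substance, only terser---it writes ``contains $R^\omega$, hence is not free'' where you spell out the slenderness step.
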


\begin{proof} If  $GX$ and $GY$ are both countable, torsion-free $R$-modules, then $\Hom_R(GX,GY)$ contains by Lemma \ref{BS-group} a copy of $R^\omega$, hence it is not free. However,  $G$ is an almost full embedding (as in Theorem \ref{axThm}), and so $\Hom_R(GX,GY)$ must be free. This shows the corollary. \end{proof}

\section{Applications to particular classes of $R$-objects}\label{section-applications}

For several applications of Main Theorem~\ref{axThm} we will consider non-trivial commutative rings $R$ with a distinguished countable subset $\bS$. We require that $0\notin\bS$, $1\in\bS$ and $st\in\bS$ whenever $s,t\in\bS$.

An $R$-module $M$ is {\em $\bS$-reduced} if $\bigcap_{s\in\bS}Ms =0$ and {\em $\bS$-torsion-free} if $ms=0, m\in M, s\in \bS$ implies $m=0$. We assume that $R$ is an $\bS$-ring, that means it is both $\bS$-reduced and $\bS$-torsion-free. The condition that the module $M$ is $\bS$-reduced is equivalent to the fact that the $\bS$-topology (generated from a basis of neighbourhoods $\{sR\mid s\in \bS\}$ of $0$ on $R$) is Hausdorff. We say that $R$ is cotorsion-free (with respect to $\bS$) if the $\bS$-completion $\Rhat$ of $R$, with respect to the $\bS$-topology, satisfies $\Hom_R(\Rhat,R)=0$. If $M$ is an $R$-module of size $<\Cont$, then $M$ is $\bS$-cotorsion-free if and only if it is $\bS$-torsion-free and $\bS$-reduced, as shown in \cite[Corollary 1.25, Vol. 1]{GT}. This definition extends naturally to any $R$-algebra $A$, where cotorsion-freeness is defined on $A_R$ -- the $R$-module structure of $A$.

We want to apply our Main Theorem~\ref{axThm} by using various realization theorems of algebras as endomorphism algebras of $R$-objects.

\subsection{Application of Corner's realization theorem}\label{corn}

Our first application of Main Theorem~\ref{axThm} will be based on an extension of Corner's celebrated theorem representing all countable rings with torsion-free, reduced additive structure as endomorphism rings of abelian groups, see \cite[Theorem A]{corner}.

Our Main Theorem~\ref{axThm} requires only that the $R$-algebra $A$ which should produce $M$ is free (i.e. free as an $R$-module), but we obtain in addition $\size{A}=\size{M}$. In the case of abelian groups of finite rank, the existence of such a pair $(A,M)$ with $\rk A = \rk M$ and $\End_R M=A$ was (after Corner's theorem) first a conjecture by Zassenhaus, then answered positively in Zassenhaus \cite{Zas} and extended by Butler \cite{But} in the last century. One of us noted in \cite{P} that in case of abelian groups of infinite rank Corner's original proof can be slightly modified to provide the existence of a pair $(A,M)$, of size $2^{\aleph_0}$, with $A$ a free ring and $M$ an abelian group as above. Since we are now working over more general rings $R$ and free $R$-algebras $A$, we must extend \cite[Corollary 20.2, p. 534, Vol. 2]{GT}.

\begin{theorem} \label{genCo} Let $R$ be an $\bS$-ring of cardinality $<\Cont$ and $A$ a free $R$-algebra of rank $\le \Cont$ which acts faithfully (on the right) of a free $R$-module $C$ of rank $ \l\le \Cont$. Then there exists a family $\{G_X\mid  X\subseteq
\l\}$ of $\bS$-reduced and $\bS$-torsion-free $R$-modules such
that  $C\subseteq G_X\subseteq G_{X'} \subseteq \Chat$ for
$X\subseteq X'\subseteq \l$, $\size{G_X}= \size{C}$ and, for any $X,X'\subseteq \l$,
\begin{equation*}
\Hom_R(G_X,G_{X'}) \cong
\begin{cases}
\ A & \text{if $\ \  X \subseteq X'$}\\
\ \ 0, &  \text{if $\ \  X \not\subseteq X'$.}
\end{cases}
\end{equation*}
\end{theorem}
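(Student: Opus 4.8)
The plan is to run the Corner--G\"obel realization technique in its $\bS$-adic form, as in \cite[Corollary 20.2, Vol.~2]{GT}, but with two modifications: the regular module is replaced by the arbitrary faithful $A$-module $C$, and instead of a single realizing module we build the whole family $\{G_X\}$ simultaneously. All modules will be constructed inside the $\bS$-completion $\Chat$, which is $\bS$-torsion-free and contains $C$ as a dense $\bS$-pure submodule because $R$ is an $\bS$-ring.

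First I would fix a sequence $q_0,q_1,\dots\in\bS$ with $q_n\mid q_{n+1}$ and $q_n\to 0$ in the $\bS$-topology, so that series $\sum_n c_n q_n$ with $c_n\in C$ converge in $\Chat$. The combinatorial core is the choice of \emph{branch elements}: using an almost disjoint family of size $\Cont$ together with a basis of $C$, I would produce, for each $\alpha\in\l$, an element $g_\alpha=\sum_n b_{\alpha,n}\,q_n\in\Chat$ such that the $g_\alpha$ are $\bS$-adically independent over $C$ and over each other, plus a further fixed family of branch elements whose role is to rigidify the endomorphism algebra. Define $G_X$ to be the $A$-submodule of $\Chat$ generated by $C$, the fixed rigidifying elements, and $\{g_\alpha:\alpha\in X\}$. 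The inclusions $C\subseteq G_X\subseteq G_{X'}\subseteq\Chat$ for $X\subseteq X'$ are then immediate, as is the fact that each $G_X$ is $\bS$-torsion-free and $\bS$-reduced, being a submodule of $\Chat$; the estimate $\size{G_X}=\size{C}$ is a routine cardinality count using $\rk A\le\Cont$ and $\l\le\Cont$.

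For the Hom-computation the easy half is that every $a\in A$ acts by continuous right multiplication on $\Chat$; when $X\subseteq X'$ this carries each generator of $G_X$, in particular each $g_\alpha$ with $\alpha\in X\subseteq X'$, into $G_{X'}$, so right multiplication embeds $A$ into $\Hom_R(G_X,G_{X'})$, injectively by faithfulness of the $A$-action on $C$. The reverse inclusion and the vanishing case are the rigidity (Step-Lemma) argument. Given $\varphi\in\Hom_R(G_X,G_{X'})$, its restriction to $C$ together with the rigidifying branch elements forces $\varphi$ to agree on $C$ with right multiplication by a unique $a\in A$; the difference $\varphi-a$ then defines, through the transcendental branch elements, a homomorphism out of a copy of $\Rhat$ landing in $C\subseteq\Chat$, which must vanish since $R$ is cotorsion-free, i.e.\ $\Hom_R(\Rhat,R)=0$. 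Hence $\varphi$ is right multiplication by $a$, giving $\Hom_R(G_X,G_{X'})\cong A$ when $X\subseteq X'$. If $X\not\subseteq X'$, choose $\alpha\in X\setminus X'$; then $(g_\alpha)\varphi=g_\alpha a$ would have to lie in $G_{X'}$, but $g_\alpha$ is $\bS$-adically independent of $G_{X'}$, so this forces $a=0$ and hence $\varphi=0$.

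I expect the main obstacle to be exactly this rigidity step, and specifically the interaction between the $A$-module structure and the $\bS$-adic independence of the branch elements. One must arrange the $g_\alpha$ so that in any relation $(g_\alpha)\varphi=\sum_\beta g_\beta a_\beta+c$ (with $c\in C$, $a_\beta\in A$) the finitely many $A$-translates occurring can be separated $\bS$-adically; this is where the cardinal hypotheses $\size{R}<\Cont$ and $\rk A\le\Cont$ are genuinely used, as they guarantee that enough mutually independent branches survive the purification. Extending \cite[Corollary 20.2, Vol.~2]{GT} from the regular module $A_R$ to an arbitrary faithful module $C$, while threading a single Step-Lemma argument through all pairs $(X,X')$, is the part that requires the most care.
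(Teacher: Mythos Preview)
Your overall architecture---building the $G_X$ as $A$-submodules of $\widehat{C}$ sandwiched between $C$ and $\widehat{C}$, then computing $\Hom$ via a rigidity argument---matches the paper. The rigidification mechanism, however, is different, and your key step has a gap.

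The paper uses Corner's transcendental method rather than branch elements or a Step-Lemma. It chooses a family $\{w_c : c\in C\}\cup\{w_x : x\in\lambda\}$ of \emph{algebraically independent} elements of $\widehat{R}$ (scalars, not elements of $\widehat{C}$; this is precisely where $|R|<2^{\aleph_0}$ is spent), sets $H_X = C + \sum_{c\in C} cAw_c + \sum_{x\in X} Cw_x$, and takes $G_X=(H_X)_*$. The crux is the intersection lemma $G_X\cap G_Xw_c = cAw_c$ whenever $cA$ is $\bS$-pure in $C$, proved by projecting onto a single coordinate $e_i\widehat{R}$ and reading off a polynomial identity among the algebraically independent $w$'s over $R$. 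This immediately yields $cA\varphi\subseteq cA$ for each basis vector, hence $e_i\varphi = e_ia_i$, and the usual $(e_i+e_j)$ trick forces all $a_i$ equal. No appeal to $\Hom_R(\widehat{R},R)=0$ is made; algebraic independence does the entire job, and the argument is finitary.

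In your sketch, the sentence ``the rigidifying branch elements force $\varphi$ to agree on $C$ with right multiplication by a unique $a$'' is exactly the content to be proved, and you supply no mechanism for it---this is where the paper's intersection lemma lives, and it is not a Step-Lemma at all. Your subsequent cotorsion-free step is also misplaced: once $\varphi$ agrees with $a$ on the dense submodule $C$, the unique continuous extension to $\widehat{C}$ already gives $\varphi=a$ on all of $G_X$, so nothing remains to kill; and if you intended to invoke $\Hom_R(\widehat{R},-)=0$ \emph{before} identifying $a$, note that the target is $G_{X'}$, which may have cardinality $2^{\aleph_0}$, so the small-size criterion for cotorsion-freeness is not directly available.
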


We leave it as an exercise to derive a similar theorem, strengthening the isomorphism to a topological isomorphism similar to \cite[p. 532, Theorem 20.1, Vol. 2]{GT}. Also note that  the theorem applies for $C=A$, and if $C$ is countable, then all $G_X$ are countable.

\Pf Let  $C=\bigoplus_{i<\l}e_iR$ (and note that Theorem~\ref{genCo} follows immediately from \cite[p. 532, Theorem 20.1, Vol. 2]{GT} if $\l < \Cont$).  Since $\size{R} <\Cont$ we obtain from \cite[p. 17, Theorem 1.21, Vol.~1]{GT} the existence of $\Cont$ pure elements in $\Rhat$, which are algebraically independent over $R$. We can essentially follow the arguments of the proof of \cite[p. 532, Theorem 20.1, Vol. 2]{GT}, but must twice switch between $R$ and $A$. By the cardinal assumptions  we can enumerate a subfamily of algebraically independent elements $w_x, w_c$ ($x\in\l, c\in C$) without repetition by the indexing set $C\, \dot\cup \l$.

For $X\subseteq \l$ let
$$H_X= C + \sum_{c\in C}cAw_c + \sum_{x\in X}Cw_x \text{ and } C\subseteq G_X= (H_X)_*\subseteq \Chat$$
where $G_X$ is the $\bS$-purification of $H_X$, which can be expressed as $G_X=\Chat \cap \bS^{-1}H_X$.

If $X\subseteq X'\subseteq \l$, then $G_X\subseteq G_{X'}$ are $A$-submodules and naturally $$A\subseteq \Hom_R(G_X,G_{X'}).$$ The key step for the proof of the theorem is the following claim.

$$ \text{If } c\in C \text{ and } cA \text{ is  an
$\bS$-pure submodule of } C,$$\vspace*{-1cm}
\begin{eqnarray}\label{equcoeff}\text{then } \ G_X\cap G_Xw_c =
cAw_c.\end{eqnarray} We only have to show that $G_X\cap G_Xw_c
\subseteq cAw_c$. So let $x\in G_X\cap G_Xw_c$, and we may also assume that $x\in H_X\cap H_Xw_c$. Then we can write
\begin{equation}\label{equation-x-intersection}
x = c'' + \sum\limits_{d\in C} dw_da_d + \sum\limits_{x\in X}
c_xw_x = c'w_c + \sum\limits_{d\in C} dw_da'_dw_c +
\sum\limits_{x\in X} c'_xw_xw_c
\end{equation}
for suitable elements
$c, c',c'',c_x, c'_x\in C$ and $a_d,a'_d\in A$.

If $c=\sum_{i<\l}e_i c_i\in \widehat{\bigoplus_{i<\l}e_i R}=\Chat$ with $c_i\in \Rhat$, then $(cw)_i =c_iw$ for $w\in \Rhat$, because $\Chat$ is naturally an $\Rhat$-module. Now we can restrict Equation~(\ref{equation-x-intersection}) to the components at $e_i\Rhat$ and get the following equalities for all $i<\l$.
$$0 = c''_i + ((ca_c)_i - c'_i) w_c + \sum\limits_{c\ne d\in C} (da_d)_iw_d -
\sum\limits_{d\in C} (da'_d)_i(w_dw_c) + \sum\limits_{x\in X} (c_x)_iw_x -
\sum\limits_{x\in X} (c'_x)_i (w_xw_c),$$
with coefficients in $R$ and $w_c,w_d, \dots $ algebraically independent over $R$. Thus $(ca_c)_i = c'_i$ for all $i <\l$ and all other coefficients are $0$. These equalities give $c a_c = c'$ and (\ref{equation-x-intersection}) implies  $x = ca_c w_c\in cAw_c \cap \Chat$ and so $x\in cAw_c$. Hence (\ref{equcoeff})
follows.\smallskip

Now we can continue the proof of Theorem 20.1 in \cite{GT}, with some simplification, because we work with the discrete topology.

Let $X,X'\subseteq \l$ and let $\va: G_X\arr G_{X'}$ be an
$R$-homomorphism. We derive the next claims (as in \cite{GT}).
\begin{eqnarray}\label{v-invar} \text{If } c\in C \text{ and }
cA \text{ is  an $\bS$-pure submodule of }C \text{, then }
cA\va\subseteq cA.\end{eqnarray}

In particular, we now have that, for each $i<\l$, there exists
$a_i\in A$,  such that $e_i\va = e_ia_i$  If $i,j <\l$ are distinct, then
$(e_i+e_j)A$ is a direct summand of $A$. Thus, by (\ref{v-invar}),
there exists $a' \in A$  such that
$$e_i a_i + e_j a_j = (e_i +e_j)\va = (e_i + e_j) a' = e_i a' + e_j
a'$$ and hence there is $a\in A$ such that $$ e_i\va = e_i a \ \text{ for all }
i<\l.$$
Since $e_i$'s generate $C$ we derive that $\va=a$, which answers half of the theorem.

It remains to consider $X\nsubseteq X'$.  We choose $x\in X\setminus X'$. Then $Aw_x\cap
H_{X'} = 0$ which is shown similarly to (\ref{equcoeff}). However,
$cw_x\va = caw_x\in Aw_x\cap H_{X'}$ for every $c\in A$, and hence
$ca = 0$. Consequently, $a=0$ and thus $\va = 0$. \Fp

From Theorem~\ref{genCo} we derive an obvious
\begin{corollary} \label{theorem-corner}
If $A$ is an $R$-algebra with free additive structure over an $\bS$-torsion-free and $\bS$-reduced ring $R$ such that $\size{R}<\Cont, \size{A}\le \Cont$,  then there exists an $R$-module $M=G_\emptyset$ such that:
  \begin{enumerate}
    \item  $A\cong\End_RM$.
    \item  $A\subseteq M\subseteq\widehat{A}$ as left $A$-modules.
    \item  $|M|=|A|$.
  \end{enumerate}
\end{corollary}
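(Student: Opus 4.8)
The plan is to derive Corollary~\ref{theorem-corner} directly from Theorem~\ref{genCo} by specializing the construction to the case $C = A$. The corollary asks for a single module $M$ rather than a whole family, so I would take $X = X' = \emptyset$ throughout and simply read off the relevant instance of the theorem. First I would verify the hypotheses of Theorem~\ref{genCo}: the ring $R$ is an $\bS$-ring (that is, $\bS$-torsion-free and $\bS$-reduced) with $\size{R}<\Cont$, and $A$ is a free $R$-algebra of rank $\le\Cont$. Taking $C=A$, the algebra $A$ acts faithfully on itself from the right (since $A$ has the identity element $1$, right multiplication by $a$ is zero only if $a = a\cdot 1 = 0$), and $A$ is free as an $R$-module of rank $\le\Cont = \l$. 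Thus all hypotheses of Theorem~\ref{genCo} are met with $C = A$ and $\l = \rk A \le \Cont$.

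Next I would set $M = G_\emptyset$ from the family $\{G_X\mid X\subseteq\l\}$ produced by the theorem, and extract the three asserted properties. For item (i), the theorem gives $\Hom_R(G_\emptyset, G_\emptyset)\cong A$ since $\emptyset\subseteq\emptyset$; because this isomorphism is realized by the right action of $A$ and $A$ acts faithfully, the endomorphism ring $\End_R M$ is canonically identified with $A$, giving $A\cong\End_R M$. For item (ii), the theorem provides the chain $C\subseteq G_\emptyset\subseteq\Chat$, which with $C=A$ reads $A\subseteq M\subseteq\Ahat$; these are inclusions of (left) $A$-modules by construction of the $H_X$ and their $\bS$-purifications. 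For item (iii), the theorem states $\size{G_X}=\size{C}$, so $\size{M}=\size{A}$, i.e.\ $\size{M}=\size{A}$.

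The verification is essentially bookkeeping once $C=A$ is chosen, so the only genuine point requiring care is matching conventions. I expect the main (very mild) obstacle to be checking faithfulness and the module-structure bookkeeping: one must confirm that the right action used in Theorem~\ref{genCo} indeed makes each $G_X$ a right $A$-module, and that for the endomorphism statement the passage from the abstract isomorphism $\Hom_R(G_\emptyset,G_\emptyset)\cong A$ to $A\cong\End_R M$ as $R$-algebras is compatible with composition. Since the isomorphism in Theorem~\ref{genCo} is given by right multiplication, composition of endomorphisms corresponds to multiplication in $A$, so it is an algebra isomorphism and the identification is unambiguous. Everything else is an immediate specialization, and the phrase ``an obvious corollary'' in the statement is justified.
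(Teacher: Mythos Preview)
Your proposal is correct and follows exactly the route the paper intends: the text introduces Corollary~\ref{theorem-corner} with the words ``From Theorem~\ref{genCo} we derive an obvious'' corollary, and just before the proof of Theorem~\ref{genCo} it already remarks that ``the theorem applies for $C=A$''. Your specialization $C=A$, $M=G_\emptyset$, together with the faithfulness check via $1\in A$ and the reading-off of (i)--(iii) from the conclusions of Theorem~\ref{genCo}, is precisely this derivation; the only cosmetic point is the left/right convention in item~(ii), which you rightly flag as bookkeeping rather than a mathematical issue.
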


We are ready to apply Main Theorem~\ref{axThm}.

Axioms A1 -- A4 are clear for the category of $R$-modules. Axiom B1 corresponds to (i). Axiom B3 follows since the $\bS$-completion preserves monomorphisms. Axiom B2 is a consequence of (ii) and the following:

\begin{remark}\label{remark-a-purity}
  The inclusion $A\varphi\subseteq A\cap M\varphi$ is obvious.
  Since $A$ is a free $R$-module and we have $A\subseteq M\subseteq \widehat{A}$, any element $m\in M$ can be uniquely written as a sum $m=\sum_{\sigma\in I}k_\sigma\sigma$ where $I$ is a countable subset of the basis of $A$, $k_\sigma\in R$, and the sequence $k_\sigma$ converges to $0$ in the $\bS$-topology. If $\varphi\in\Gamma$ and $m\varphi\in A$, then we have
  $$\sum_{\sigma\in I}k_\sigma\sigma\varphi=m\varphi=\sum_\tau r_\tau\tau$$
  where the right sum is the standard representation of a member of $A$: the sum is finite, $r_\tau\in R$ and the $\tau$'s belong to the basis of $A$. By uniqueness the left sum can be written as
  $$\sum_\tau(\sum_{\sigma\in I_\tau}k_\sigma)\tau$$
  where $I_\tau=\{\sigma\in I\mid \sigma\varphi=\tau\}$. Each $I_\tau$ must be nonempty although the union of $I_\tau$'s may not cover all of $I$. Choosing representatives $\sigma_\tau\in I_\tau$ we obtain
  $$m\varphi=\sum_\tau r_\tau\sigma_\tau\varphi=(\sum_\tau r_\tau\sigma_\tau)\varphi$$
  which implies the inclusion $A\varphi\supseteq A\cap M\varphi$ and therefore it verifies the second part of Axiom B2.
\end{remark}

As a consequence we obtain

\begin{corollary}\label{corollary-result-corner}
  Let $R$ be any reduced commutative ring of cardinality less than the continuum and whose additive group is torsion-free. Then there exists an almost full embedding of the category of graphs into the category of $R$-modules.
\end{corollary}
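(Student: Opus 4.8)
The plan is to place $R$ inside the $\bS$-ring framework of this section and to manufacture the object $M$ demanded by the Main Theorem from the Corner-type realisation of Corollary~\ref{theorem-corner}; the conclusion is then read off from Main Theorem~\ref{axThm}. First I would fix $\k=\ale$, so that $\Gamma$ contains one representative of each of the at most $\Cont$ isomorphism types of countable graphs. Since between two countable graphs there are at most $\aln^{\aln}=\Cont$ maps, the morphism set of $\Gamma$ has cardinality $\le\Cont$, and hence the free $R$-algebra $A$ on these morphisms has rank $\le\Cont$; as $\size{R}<\Cont$ this gives $\size{A}\le\Cont$, the cardinal bound needed below.

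Next I would supply the multiplicative set. Put $\bS=\set{n\cdot 1_R : n\in\N,\ n\ge 1}$. The hypotheses of the corollary translate verbatim into the requirements on an $\bS$-ring: $0\notin\bS$ because the additive group of $R$ is torsion-free, $1\in\bS$, and $\bS$ is closed under multiplication; $\bS$-torsion-freeness of $R$ is the torsion-freeness of its additive group, while $\bigcap_{s\in\bS}Rs=\bigcap_{n\ge 1}nR=0$ is exactly the assumption that $R$ is reduced. Hence $R$ is an $\bS$-ring, and since $\size{R}<\Cont$ these two conditions even coincide with cotorsion-freeness with respect to $\bS$ by \cite[Corollary 1.25, Vol.~1]{GT}, tying the statement back to the language of the abstract.

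With these data in hand I would apply Corollary~\ref{theorem-corner} to the free $R$-algebra $A$, obtaining an $R$-module $M=G_\emptyset$ with $A\cong\End_R M$, with $A\subseteq M\subseteq\widehat{A}$ as $A$-modules, and with $\size{M}=\size{A}$. It then remains to verify, for the category $\rc$ of $R$-modules together with this $M$, the Axioms A1--A4 and B1--B3. Axioms A1--A4 are the standard facts about a module category: $\Hom_R$ carries its usual $R$-module structure with bilinear composition, kernels, cokernels and arbitrary direct sums exist, the underlying-set functor preserves directed colimits, and monomorphisms are injective. Axiom B1 is precisely $A\cong\End_R M$, and Axiom B3 follows because the $\bS$-completion preserves monomorphisms, so right multiplication by a monomorphism of $\Gamma$ remains injective on $M\subseteq\widehat{A}$.

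The step I expect to be the main obstacle is Axiom B2, the $\Gamma$-purity $A\va=A\cap M\va$. The inclusion $A\va\subseteq A\cap M\va$ is immediate from $A\subseteq M$ and the multiplicative closure of $A$, so the real work lies in the reverse inclusion, where one must control the interaction of the $\bS$-completion with purity. The argument I would use is the one recorded in Remark~\ref{remark-a-purity}: since $A$ is free and $A\subseteq M\subseteq\widehat{A}$, every $m\in M$ has a unique expansion $m=\sum_{\sigma\in I}k_\sigma\sigma$ over a countable subset $I$ of the basis with coefficients $\bS$-converging to $0$; comparing the expansion of $m\va$ obtained this way with its finite expansion as an element of $A$, and invoking $R$-linear independence of the basis, lets me rewrite $m\va$ as $(\sum_\tau r_\tau\sigma_\tau)\va\in A\va$. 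Once B2 is in place, Main Theorem~\ref{axThm} delivers the functor $G$ from graphs to $R$-modules inducing the natural isomorphisms $\gamma$, which is the desired almost full embedding.
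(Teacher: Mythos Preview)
Your proposal is correct and follows essentially the same route as the paper: it applies Corollary~\ref{theorem-corner} to the algebra $A$ built from $\Gamma$, verifies Axioms A1--A4 for $R$-modules and B1--B3 for the resulting $M$ (with B2 handled exactly via Remark~\ref{remark-a-purity} and B3 via preservation of monomorphisms under $\bS$-completion), and then invokes Main Theorem~\ref{axThm}. You are simply more explicit than the paper about the choice $\k=\ale$, the identification of $\bS$ with the positive integers in $R$, and the cardinal bookkeeping showing $\size{A}\le\Cont$, all of which the paper leaves implicit in the surrounding text.
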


\subsection{Application of Shelah's Black Box}

The following theorem was derived in \cite{CG} by application of Shelah's Black Box (stated and proved in the appendix of \cite{CG}, for a more recent proof see also \cite[ The General Black Box 19.23, p. 508 and Theorem 20.42, p. 565, Vol. 2]{GT}).

\begin{theorem}\label{cotfreethm}
Let $A$ be an $R$-algebra, $|R|<\kappa$, with $A_R$
$\bS$-cotorsion-free. Moreover, suppose that $|A|\leq \k$ and $\lo
=\l^\k$. Then there are $\bS$-cotorsion-free $R$-submodules $G_X$
of $G=G_{\lo}$ of cardinality $\lo$ for all $X\subseteq \lo$ such
that the following holds.
$$
\Hom_R (G_X, G_{X'}) \cong \left\{
\begin{array}{rr}
A, \quad \text{\  if   }\quad  X \subseteq X'\\
0, \quad \text{ \ if   }\quad  X \not\subseteq X'
\end{array} \right.
$$
\end{theorem}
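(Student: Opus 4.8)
The plan is to follow the standard prediction/Black Box realization scheme, as in \cite{CG} and \cite[Chapters 19--20, Vol. 2]{GT}, adapted to the $\bS$-topology. First I would fix a tree $T$ whose nodes index a free $A$-module $B=\bigoplus_{\tau\in T}Ae_\tau$ and whose branches are naturally in bijection with $\lo$; the cardinal hypothesis $\lo=\l^\k$ guarantees that the number of branches is exactly $\lo$, matching the indexing set of the desired family. Passing to the $\bS$-completion $\Bhat$ of $B$, I would attach to each branch $\eta$ a \emph{branch element} of the form $b_\eta=\sum_n e_{\eta\restr n}\,s_n\in\Bhat$, where $(s_n)$ is a sequence in $\bS$ converging to $0$ in the $\bS$-topology, so that $b_\eta$ lies in $\Bhat$ but not in $B$ (the precise $b_\eta$ being pinned down in the recursion below). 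For $X\subseteq\lo$ I then define $G_X$ to be the $\bS$-purification inside $\Bhat$ of the submodule generated by $B$ together with the branch elements indexed by $X$, and set $G=G_{\lo}$. Each $G_X$ is an $\bS$-pure submodule of the $\bS$-cotorsion-free module $\Bhat$, hence itself $\bS$-cotorsion-free of cardinality $\lo$, and $X\subseteq X'$ clearly yields $G_X\subseteq G_{X'}$ together with $A\subseteq\Hom_R(G_X,G_{X'})$ acting by multiplication.

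The heart of the argument is the reverse inclusion $\Hom_R(G_X,G_{X'})\subseteq A$ for $X\subseteq X'$, which is where Shelah's Black Box enters. The Black Box furnishes a transfinite list of \emph{traps}: partial $R$-homomorphisms defined on small submodules, together with associated branches, such that every $R$-homomorphism $\varphi:G_X\arr G_{X'}$ agrees with some trap on the relevant part of its domain. I would build $G$ as an increasing union along this list, so that each potential non-scalar homomorphism is defeated: at the trap carrying a branch $\eta$ and a predicted partial map $\psi$, if $\psi$ does not already act as multiplication by an element of $A$, I would extend the partially-built module by a branch element along $\eta$ chosen so that $(b_\eta)\psi$ is forced to lie outside $G_{X'}$. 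Here the $\bS$-cotorsion-freeness of $A_R$ is exactly the input that provides enough independent directions in $\Bhat$ to carry out this ``killing''; this is the step-lemma at the core of the construction, analogous to the role played by the algebraically independent elements $w_x,w_c$ in the proof of Theorem~\ref{genCo}.

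Once the construction is complete, any $\varphi\in\Hom_R(G_X,G_{X'})$ is caught by some trap, so it cannot be a non-scalar map; hence it acts as multiplication by a single $a\in A$ on the basis $\{e_\tau\}$, and since these elements $\bS$-generate $G_X$ and $\varphi$ is continuous for the $\bS$-topology, $\bS$-reducedness forces $\varphi=a$. This gives $\Hom_R(G_X,G_{X'})\cong A$ when $X\subseteq X'$. For the case $X\not\subseteq X'$ I would pick an index in $X\setminus X'$ with branch $\eta$, so that $b_\eta\in G_X$ while the corresponding branch direction is absent from $G_{X'}$; the same trap analysis then shows that $(b_\eta)\varphi$ cannot be accommodated in $G_{X'}$ unless $a=0$, whence $\varphi=0$ and $\Hom_R(G_X,G_{X'})=0$.

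The main obstacle, as always in Black Box constructions, is the recursion of the second paragraph: one must verify that the traps can be enumerated so that every homomorphism between every pair $G_X,G_{X'}$ is genuinely predicted, that the branch element chosen to kill one trap does not spoil earlier commitments, and that the single construction is coherent across all $2^{\lo}$ subsets $X$ (the branch elements are fixed once and for all, with $X$ only selecting which of them are included). Checking that each $G_X$ remains $\bS$-cotorsion-free and that the killing choices survive the $\bS$-purification is the delicate bookkeeping that the Black Box is designed to manage; I would import this machinery from \cite[19.23 and 20.42, Vol. 2]{GT} rather than reprove it.
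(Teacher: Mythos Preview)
Your outline is essentially correct and matches the approach the paper relies on, but note that the paper does \emph{not} give its own proof of this theorem: it simply cites the result from \cite{CG} and \cite[Theorems 19.23 and 20.42, Vol.~2]{GT} as input for the applications that follow. Your sketch of the Black Box construction---free $A$-module over a tree, branch elements in the $\bS$-completion, $G_X$ as $\bS$-purification of $B$ together with the selected branch elements, traps killing non-$A$ homomorphisms via a step lemma exploiting cotorsion-freeness---is precisely the standard scheme carried out in those references, so there is nothing to correct at the level of strategy.
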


Next we apply our Main Theorem~\ref{axThm}, suppose that $\size{R}<\kappa$ and choose for the full subcategory $\Gamma$ all isomorphism classes of graphs of size $<\k$ and let $A$ be the free $R$-algebra as in Section~\ref{Axi}. Assuming that $R$ is cotorsion-free, it is immediate that also $A$ is cotorsion-free of cardinality $<\kappa$. For any cardinal $\l$ with $\lo
=\l^\k$ we find by Theorem~\ref{cotfreethm}  a cotorsion-free $R$-module $M=G_\emptyset$ of size $\size{M}=\lo$. From the construction it is also immediate that $M$ is sandwiched as $F\subseteq M\subseteq_*\Fhat$, where $M\subseteq_*\Fhat$ denotes a pure subgroup and  $F=\bigoplus_{\alpha<\l}e_\alpha A$ is a free $A$-module, thus a free $R$-module, and $\Fhat$ is the $\bS$-completion of $F$ such that $\End_R M=A$. Now we assume the mild condition that $\kappa^{\aleph_0}=\kappa$ (mainly saying that $\kappa \ge \Cont$) and check the axiomatic assumption of Theorem~\ref{axThm}.
Axiom B1 is clear. Since the inclusion $A\subseteq\widehat{A}$ is an $A$-retract of $F\subseteq\widehat{F}$ an argument as in Remark~\ref{remark-a-purity} implies Axiom B2.
Axiom B3 needs an easy consideration. By definition of $M$ it follows that $\id_C\in A$ for any $C\in \Gamma$, and if $\va:C\arr D$ is a monomorphism, then $\va=a\in A$ for some $a\in A$ acts by scalar multiplication on $M$. This is uniquely defined on $F$ and extends canonically to $M$. Now it is clear that $a\restr M\id_C$ is also a monomorphism. Thus Theorem~\ref{axThm} applies and we obtain the following

\begin{corollary}\label{corollary-result-black-box} Let $R$ be a cotorsion-free $\bS$-ring of size $<\kappa$ and $\kappa =\kappa^{\aleph_0}$. Then there is an almost full embedding of the category of graphs into the category of $R$-modules.
\end{corollary}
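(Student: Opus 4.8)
The plan is to deduce the corollary directly from Main Theorem~\ref{axThm}, taking for $\rc$ the category of $R$-modules. Thus the whole task splits into (a) checking that $R$-Mod satisfies A1--A4 and (b) producing an object $M$ satisfying B1--B3. Part (a) is routine: $R$-Mod is an abelian (in particular preabelian) $R$-category with arbitrary direct sums, and its forgetful functor preserves directed colimits and is faithful with monomorphisms one-to-one on underlying sets, so A1--A4 hold. For the setup I would fix $\Gamma$ to contain one representative of each isomorphism class of graphs of size $<\kappa$ and form the free $R$-algebra $A$ on the morphisms of $\Gamma$ exactly as in Section~\ref{Axi}. A cardinality count under the hypothesis $\kappa=\kappa^{\aleph_0}$ (which in particular forces $\kappa\ge 2^{\aleph_0}$ and $\cf\kappa>\aleph_0$) gives $\size{A}\le\kappa$, and since $A$ is free as an $R$-module, its additive structure inherits $\bS$-cotorsion-freeness from $R$.

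Next I would invoke Shelah's Black Box, Theorem~\ref{cotfreethm}: choosing a cardinal $\lambda$ with $\lambda^{\aleph_0}=\lambda^{\kappa}$ (for instance $\lambda=2^{\kappa}$, where $\lambda^{\kappa}=2^{\kappa}=\lambda$ and $\lambda^{\aleph_0}=2^{\kappa}=\lambda$ using $\kappa\ge\aleph_0$), the theorem applied to $A$ yields the module $M=G_\emptyset$ of cardinality $\lambda^{\aleph_0}$ with $\End_R M\cong A$. The existence of such an admissible $\lambda$ is what makes the construction non-vacuous. I would record the structural feature supplied by the construction, namely that $M$ is sandwiched between the free $A$-module $F=\bigoplus_{\alpha<\lambda}e_\alpha A$ and its $\bS$-completion, $F\subseteq M\subseteq_*\widehat{F}$, since this sandwich is what drives the verification of the B-axioms.

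With $M$ fixed I would verify B1--B3. Axiom B1 is immediate from $\End_R M\cong A$. Axiom B3 is the easy direction: for a monomorphism $\va:C\arr D$ in $\Gamma$ the corresponding $a\in A$ acts by scalar multiplication, this action is injective on the free module $F$ and extends canonically to $M$, so $a\restr M\id_C$ is a monomorphism. The hard part will be Axiom B2, the $\Gamma$-purity $A\va=A\cap M\va$: the inclusion $A\va\subseteq A\cap M\va$ is trivial, so the content is the reverse inclusion, and the obstacle is that $M$ now sits between $F$ and $\widehat{F}$ rather than between $A$ and $\widehat{A}$, so the clean unique-representation computation of Remark~\ref{remark-a-purity} does not apply verbatim. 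I would circumvent this by observing that $A=e_0A$ is an $A$-module direct summand of $F$, so the inclusion $A\subseteq\widehat{A}$ is an $A$-retract of $F\subseteq\widehat{F}$; composing with this retraction transports any relation of the form $m\va\in A$ back into the $A\subseteq\widehat{A}$ situation, where the convergent-series argument of Remark~\ref{remark-a-purity} produces a preimage in $A$ and hence yields $A\cap M\va\subseteq A\va$. Once B2 is secured, all hypotheses of Main Theorem~\ref{axThm} are met and the desired almost full embedding of graphs into $R$-Mod follows.
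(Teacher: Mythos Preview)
Your proposal is correct and follows essentially the same route as the paper: apply Main Theorem~\ref{axThm} to $R$-Mod, build $M=G_\emptyset$ from Theorem~\ref{cotfreethm} sandwiched as $F\subseteq M\subseteq_*\widehat{F}$, read off B1, get B3 from injectivity of scalar multiplication by a monomorphism of $\Gamma$ on $F$ and its canonical extension to $M$, and reduce B2 to Remark~\ref{remark-a-purity} via the $A$-retract $A\subseteq\widehat{A}$ of $F\subseteq\widehat{F}$. Your write-up is in fact slightly more explicit than the paper's (you spell out A1--A4, exhibit a concrete $\lambda=2^{\kappa}$, and note that $\kappa=\kappa^{\aleph_0}$ forces $\cf\kappa>\aleph_0$), but the argument is the same.
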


It is interesting to note that the above $R$-module $M$ over a countable, principal ideal domain is also $\aleph_1$-free, i.e. all its countably generated submodules are free. (This is immediate from the construction of $M$ using a support argument and Pontryagin's theorem, that countably generated $R$-modules are free if all submodules of finite rank are free, see \cite{Fu}.) Thus the functor obtained in Corollary~\ref{corollary-result-black-box} takes values in $\aleph_1$-free $R$-modules.

The notion $\ale$-freeness can be generalized to more general algebras (circumventing also Pontryagin's theorem).

An $A$-module $M$ is
$\k$-free if there is a family $\CC$  of $\bS$-pure $A$-submodules  of
$M$ satisfying the following conditions.
\begin{enumerate}
\item [(i)] Every element of $\CC$ is a $<\k$ generated free $A$-submodule of
$M$.
\item [(ii)]  Every subset of $M$ of cardinality $<\k$ is contained in an
element of $\CC$.
\item [(iii)]  $\CC$ is closed under unions of well-ordered chains of length
$<\k$.
\end{enumerate}
We say that $\CC$ is $<\kappa$-closed. This definition applies for regular cardinals, in particular for
$\k=\aleph_n$, which is the case we are interested in. Exploiting the $\ale$-freeness from the Black Box construction, just mentioned, after some more lengthy calculation in \cite{GHS} we obtain the following stronger result which can be applied for the embedding of graphs.

\begin{theorem}\label{thethm} If $R$ is a  cotorsion-free $\bS$-ring
and $A$ an $R$-algebra with free $R$-module $A_R$, $\size{A} <\mu, k <\omega$
and $\l= \beth^+_k(\mu)$, then we can construct an $\aleph_k$-free
$A$-module $G$ of cardinality $\l$ with $R$-endomorphism algebra
$\End_RG =A$.
\end{theorem}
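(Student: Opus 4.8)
The plan is to combine the Black Box realization machinery behind Theorem~\ref{cotfreethm} with a carefully chosen $\aleph_k$-free combinatorial skeleton, so that the module $G$ emerges $\aleph_k$-free \emph{and} with $\End_R G = A$. Concretely, I would build $G$ as a pure submodule $F \subseteq G \pure \Fhat$ of the $\bS$-completion of a free $A$-module $F$, exactly as in Corollary~\ref{corollary-result-black-box}, but with the free generators indexed over the tree of strictly decreasing sequences ${}^{k\downarrow}\l$, where $\l = \beth_k^+(\m)$. The role of the cardinal $\beth_k^+(\m)$ is purely combinatorial: it is the least cardinal carrying a family of branches whose pairwise intersections are controlled through $k$ levels, which is precisely what is needed to witness $\aleph_k$-freeness (and, typically, not more) of the base module. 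This is the standard ``incompactness of freeness'' phenomenon.

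First I would fix the skeleton and a base $A$-module $B$. I take node generators $e_\eta$ and, along each branch, a sequence of ``partial sums'' tied together by $\bS$-adic relations, so that $B$ sits as $F \subseteq B \pure \Fhat$ and $A$ embeds purely via the identity element (the analogue of Axiom~B2). The strictly-decreasing structure of ${}^{k\downarrow}\l$ guarantees that any set of fewer than $\aleph_k$ branches meets only along initial segments that split off at distinct levels; organizing these into a continuous well-ordered filtration yields the witness family $\CC$ of $\bS$-pure free $A$-submodules required by the definition of $\aleph_k$-freeness preceding the theorem. Thus $B$ is $\aleph_k$-free before any endomorphisms are killed.

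Next I would run the Black Box (in a form strengthened to be compatible with the tree, as in the General Black Box of \cite{CG} and \cite{GT}) to enumerate all ``dangerous'' partial $R$-homomorphisms of $B$, namely those not already realized as scalar multiplication by an element of $A$. For each such trapped partial map $\va$ I would, along the predicted branch, adjoin an element of $\Fhat$ (equivalently, impose one further $\bS$-adic relation) forcing any genuine $R$-endomorphism extending $\va$ to produce an element of $\Hom_R(\Rhat,R)$; cotorsion-freeness of $R$ makes this group vanish, giving a contradiction unless $\va$ is scalar multiplication by $A$. Transfinite iteration over the Black Box traps then produces the final module $G$, with $A \subseteq \End_R G$ automatic and the reverse inclusion supplied by the trapping, so $\End_R G = A$, while $|G| = \l$ follows from the cardinal bookkeeping of the construction.

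The main obstacle --- and the reason the argument in \cite{GHS} is lengthy --- is showing that the Black Box modifications do not destroy $\aleph_k$-freeness. Each trap adjoins an element whose support must be arranged to lie essentially along a single branch of ${}^{k\downarrow}\l$ and to be spread out enough to be absorbed into some member of the filtration; one then has to verify that after adding all trap elements the family $\CC$ can still be refined to a $<\aleph_k$-closed family of $\bS$-pure free $A$-submodules covering $G$. Reconciling the Black Box's demand for many completion elements (to kill endomorphisms) with the local freeness demanded by $\CC$ is exactly the delicate bookkeeping that forces the passage to $\l = \beth_k^+(\m)$, and it is where essentially all of the work resides.
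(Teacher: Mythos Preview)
The paper does not actually prove this theorem: it is stated without proof and attributed to \cite{GHS} (``after some more lengthy calculation in \cite{GHS} we obtain the following stronger result''). The only in-text commentary is the remark immediately preceding the statement, that one ``exploit[s] the $\aleph_1$-freeness from the Black Box construction'' and pushes it further, together with the definition of $\beth^+_k(\mu)$ afterwards. There is therefore no proof in the paper to compare your proposal against.

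That said, your outline is consistent with what the paper gestures at and with the known structure of the argument in \cite{GHS}: build $G$ with $F\subseteq G\subseteq_*\widehat F$ over a free $A$-module $F$ indexed by a tree of height $k$ below $\lambda=\beth^+_k(\mu)$, use the combinatorics of that tree to exhibit the $<\aleph_k$-closed family $\mathcal C$ witnessing $\aleph_k$-freeness, and run a Black Box compatible with the tree to kill all non-$A$ endomorphisms via cotorsion-freeness. You also correctly locate the genuine difficulty --- verifying that the trap elements adjoined during the Black Box iteration can be absorbed into members of $\mathcal C$ without destroying purity or freeness --- and that this bookkeeping is what forces the specific cardinal $\beth^+_k(\mu)$. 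Since the paper offers no details beyond the citation, your sketch is as much as can be said here; a full proof would have to reproduce the machinery of \cite{GHS}.
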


Here $\l$ is obtained inductively. Let $\size{A} < \mu= \mu^{\size{A}}$, put $\beth^+_0(\mu)=\mu$
and $\beth^+_{n+1}(\mu) = (2^{\beth^+_n(\mu)})^+$  which is the
successor cardinal of the power set of $\beth_n^+(\mu)$. In a recent paper Herden \cite{herden} is able to replace $\beth^+_k(\mu)$ by $\beth_k(\mu)$, where inductively $\beth_0(\mu) = \mu$ and $\beth_{n+1}(\mu) = 2^{\beth_n(\mu)}$.

Thus we obtain the following stronger embedding theorem.

\begin{corollary}\label{corollary-stronger-embedding}  If $k$ is any natural number, $R$ is a cotorsion-free $\bS$-ring, then  there is an almost full embedding of the category of graphs into the category of $\aleph_k$-free
$R$-modules.\end{corollary}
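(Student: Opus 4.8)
The plan is to rerun the proof of Corollary~\ref{corollary-result-black-box} essentially verbatim, but to feed Main Theorem~\ref{axThm} with the stronger object supplied by Theorem~\ref{thethm} instead of the one coming from the Black Box theorem. First I would fix a regular cardinal $\kappa$ with $\kappa=\kappa^{\aleph_0}$, $\kappa>\size{R}$ and $\kappa>\aleph_k$; such $\kappa$ exist, e.g. $\kappa=\lambda^+$ with $\lambda=\nu^{\aleph_0}$ for a large enough $\nu\ge\aleph_k$, $\nu\ge\size{R}$. With this $\kappa$ I form the subcategory $\Gamma$ of all graphs of size $<\kappa$ and the free $R$-algebra $A$ exactly as in Section~\ref{Axi}; as in Corollary~\ref{corollary-result-black-box}, $A_R$ is free and cotorsion-free and $\size{A}<\kappa$. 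I then apply Theorem~\ref{thethm} to this $A$ (choosing $\mu$ with $\size{A}<\mu=\mu^{\size{A}}$, for instance $\mu=2^{\size{A}}$, and $\lambda=\beth^+_k(\mu)$) to obtain an $\aleph_k$-free $A$-module $M$ of cardinality $\lambda$ with $\End_RM=A$, sandwiched as $F\subseteq M\pure\Fhat$ with $F$ a free $A$-module.

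Next I would verify the hypotheses of Main Theorem~\ref{axThm} for the category $\rc$ of $R$-modules and this $M$. Axioms A1--A4 hold for $R$-modules. Axiom B1 is the identity $\End_RM=A$. Axioms B2 and B3 are established exactly as in Corollary~\ref{corollary-result-black-box}: B2 follows from the sandwich $F\subseteq M\pure\Fhat$ together with the $A$-retract argument of Remark~\ref{remark-a-purity}, and B3 holds because a monomorphism $\varphi:C\arr D$ of $\Gamma$ acts on $M$ as scalar multiplication by an element of $A$ which is monic on $M\id_C$. Main Theorem~\ref{axThm} then produces a functor $G$ from graphs to $R$-modules for which $\gamma$ is a natural isomorphism. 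Since the $\aleph_k$-free $R$-modules form a full subcategory, restricting the codomain of $G$ to it leaves $\gamma$ an isomorphism, so $G$ remains an almost full embedding as soon as every value $GX$ is $\aleph_k$-free.

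The remaining, and principal, task is therefore to show that each $GX$ is $\aleph_k$-free. For $X\in\Gamma$ we have $GX=M\id_X$, which by Remark~\ref{remark-m-splits} is a direct summand of the $\aleph_k$-free module $M$; here I would argue that $GX$ inherits $\aleph_k$-freeness by transporting the $<\kappa$-closed family $\CC$ of $\bS$-pure free submodules witnessing the freeness of $M$, intersecting its members with the summand $M\id_X$ along the idempotent decomposition $M=M\id_X\oplus M(1-\id_X)$ and using $\bS$-purity to preserve freeness. For an arbitrary graph $X$, formula (\ref{equation-gx-def}) exhibits $GX$ as the $\kappa$-directed union of the submodules $GC$ with $C\subseteq X$, $\size{C}<\kappa$, each of which is $\aleph_k$-free by the previous case. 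Because $\kappa$ is regular and $\kappa>\aleph_k$, any subset of $GX$ of size $<\aleph_k$ is already contained in a single $GC$; collecting the families attached to these $GC$ then yields a $<\aleph_k$-closed family of $\bS$-pure free submodules witnessing the $\aleph_k$-freeness of $GX$.

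The hard part is precisely this inheritance of $\aleph_k$-freeness: one must check that the members of $\CC$ remain free and $\bS$-pure after intersection with the summand $M\id_X$, and that the regularity of $\kappa$ with $\kappa>\aleph_k$ really confines every $<\aleph_k$-generated submodule of $GX$ to one $GC$. Once these two points are granted, everything else is a direct re-run of Corollary~\ref{corollary-result-black-box}, with Theorem~\ref{thethm} taking the place of Theorem~\ref{cotfreethm}.
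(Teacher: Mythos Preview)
Your proposal is correct and matches the paper's approach: the paper gives no separate proof of this corollary, simply recording it as the outcome of feeding the module of Theorem~\ref{thethm} into the machinery of Main Theorem~\ref{axThm} exactly as was done for Corollary~\ref{corollary-result-black-box}. You in fact supply more than the paper does, namely an explicit argument that the values $GX$ inherit $\aleph_k$-freeness (via the idempotent splitting $C\cap M\id_X=C\id_X$ for $C\in\CC$ and the $\kappa$-directedness of the colimit with $\kappa>\aleph_k$); the paper leaves this point implicit, and your identification of it as the one nontrivial step is well placed.
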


\subsection{Application of Shelah's Elevator}
In this section we will first show the existence of an almost full embedding of graphs into the category of $R_4$-modules, where $R$ is any ring, possibly the field $\mathbb{F}_2$ with $2$ elements. Recall the following well-known definition from representation theory of algebras, see e.g. \cite[p. 603, Theorem 22.8, Vol. 2]{GT}.

\begin{definition}\label{definition-r-4-mod}$R_4$-{\bf Mod} denotes
the category of $R$--modules $M$ with $4$ distinguished submodules
${\bf M} = (M, M^k \ :k < 4)$. We will also
write ${\bf M} = (M, M^0,M^1,M^2,M^3)$. Call $\bf M$ as above an
$R_4$--module. Also recall that the homomorphisms between two $R_4$--modules ${\bf
M}, {\bf M'}$ must respect their distinguished submodules $M^k,
M'^k$, thus we define
\begin{align*}
\bHom_R({\bf M},{\bf M'})&={\Hom_R}\big(M,M'; M^k, M'^\k \ | \  k < 4\big) \\
& = \big\{\s \in \Hom_R(M,M'): M^k\s\subseteq M'^k \text{ for all }
k  < 4\big\}.
\end{align*}
\end{definition}

We quickly note, that $4$ in the next results is also minimal. E.g. $R_3$-modules over a field $R$ are of finite representation type, hence infinite dimensional $R_3$-vector spaces will decompose, as follows from \cite{RT,Si2}

Using the Shelah Elevator (see \cite[p. 515, Theorem 19.29, Vol. 2]{GT}) the following theorem  was shown in \cite{GM} (see also \cite[p. 603, Corollary 22.10, Vol. 2]{GT}).

\begin{theorem}\label{closedsub} Let $R\ne 0$ be a commutative ring,
$\l$ any infinite cardinal, and $A$ an $R$--algebra which is
generated by no more than $\l$ elements. Embed $A$ in $\End_RM$ by
scalar multiplication, where $M =\bigoplus_\l A$. Then there exists
an $R_4$--module ${\bf M} =
(M,M^0,M^1,M^2,M^3)$ with $A = \bEnd_R \bf M$, where ${\bEnd_R \bf M}
= \{\va \in \End_RM: M^k\va \subseteq M^k \text{ for } k <4\}$. All modules $M^k, M/M^k$ are $A$-free of rank $\l$.
\end{theorem}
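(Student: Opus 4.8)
\emph{Setup and goal.} Since $A$ is generated by at most $\l$ elements and $\l$ is infinite we have $\size{A}\le\l$, so $M=\bigoplus_\l A$ is free of rank $\l$ with $\size{M}=\l$. Writing $M=\bigoplus_{\alpha<\l}e_\alpha A$ with free generators $e_\alpha$, the scalar action of $a\in A$ is right multiplication $\rho_a\colon e_\alpha x\mapsto e_\alpha xa$, and $A\cong\{\rho_a:a\in A\}\subseteq\End_R M$. The plan is to produce four \emph{right} $A$-submodules $M^0,M^1,M^2,M^3$ such that (i) each $M^k$ and each quotient $M/M^k$ is $A$-free of rank $\l$, and (ii) every $R$-endomorphism $\va$ of $M$ with $M^k\va\subseteq M^k$ for all $k<4$ has the form $\rho_a$. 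Because each $M^k$ is a right $A$-submodule it is automatically $\rho_a$-invariant, so the scalar multiplications lie in $\bEnd_R\mathbf M$, giving $A\subseteq\bEnd_R\mathbf M$; then (ii) supplies the reverse inclusion and hence $A=\bEnd_R\mathbf M$.

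\emph{What the subspaces must force.} Conceptually $\{\rho_a\}$ is the centralizer of the right-$A$-linear ring $\End_A M$ inside $\End_R M$ (the double-centralizer property of the free module $M$). Thus the four submodules must force an a priori arbitrary $R$-linear $\va$ (i) to respect the coordinate decomposition and act on each $e_\alpha A\cong A_R$ by a right multiplication -- in particular to commute with the left-multiplication structure internal to each copy of $A_R$ -- and (ii) to use one and the same multiplier on all coordinates. I would arrange (i) by placing distinguished submodules in ``general position'' relative to $\bigoplus_\alpha e_\alpha A$, in the style of the tame four-subspace quiver $\tilde D_4$, which blocks coordinate mixing and pins $\va$ down to coordinatewise right multiplications, and then handle the internal rigidity of each $A_R$ together with (ii) by a fourth submodule built so generically that no nonscalar endomorphism survives. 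A crucial point is that this fourth submodule must \emph{not} be the graph of a single global operator $\phi\colon M\to M$, since such a $\phi$ (and its centralizer) would preserve the configuration and contribute nonscalar elements to $\bEnd_R\mathbf M$. It is exactly this need for a generic, operator-free fourth subspace that forces a transfinite construction.

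\emph{The engine and the main obstacle.} I would carry out that transfinite construction with Shelah's Elevator \cite[Theorem 19.29, Vol.~2]{GT}. Fix a bookkeeping enumeration of length $\l$ of all potential nonscalar endomorphisms -- more precisely of the partial data predicting them -- and build the generic submodule as a continuous increasing union $\bigcup_{\alpha<\l}N_\alpha$ of $A$-free submodules; at stage $\alpha$ the Elevator's step lemma is to provide a free extension $N_\alpha\subseteq N_{\alpha+1}$, with free quotient, that is consistent with all scalar multiplications but incompatible with the $\alpha$-th predicted nonscalar endomorphism, thereby excluding it from $\bEnd_R\mathbf M$. The hard part will be performing each step so that it simultaneously kills the targeted nonscalar endomorphism -- including the subtle internal endomorphisms of a single copy of $A_R$ that merely fail to commute with left multiplication -- and preserves $A$-freeness of rank $\l$ of all four $M^k$ and of all four quotients $M/M^k$. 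Reconciling these two demands and propagating them through the limit stages of the continuous chain is precisely what the Elevator is designed to guarantee, and is the heart of the argument.

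\emph{Freeness and conclusion.} Once the construction stabilizes, the positional submodules and their quotients are $A$-free of rank $\l$ because $M$ is split into free pieces, while the generic submodule $\bigcup_\alpha N_\alpha$ and the quotient $M/\bigcup_\alpha N_\alpha$ remain $A$-free of rank $\l$ since freeness is preserved by the step extensions and by the directed colimit along the continuous chain. Combining this with the rigidity secured by the bookkeeping yields $A=\bEnd_R\mathbf M$ with all $M^k$ and $M/M^k$ $A$-free of rank $\l$, as claimed.
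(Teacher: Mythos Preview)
The paper does not supply a proof of this theorem; it is quoted as a known result from G\"obel--May \cite{GM} (see also \cite[Corollary 22.10, Vol.~2]{GT}), with the remark that the proof uses Shelah's Elevator \cite[Theorem 19.29, Vol.~2]{GT}. Your sketch invokes exactly that tool and cites the same reference, so at the level of ``which engine to use'' you agree with the paper.

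Where your outline drifts from the actual construction is in how the Elevator is deployed. You describe a Black-Box-flavored transfinite induction: enumerate (partial data for) the potential nonscalar endomorphisms in length~$\l$, and at stage~$\alpha$ extend so as to kill the $\alpha$-th one. You yourself flag the obstruction---$\size{\End_R M}$ may be $2^\l$, not $\l$---and cover it with the phrase ``partial data predicting them,'' but you do not say what prediction principle supplies this, and none is available here: $R$ may be a finite field, so there is no $\bS$-topology, no completion, and no density argument of the kind used in Sections~\ref{corn} or in the Black-Box application. The Elevator is precisely the replacement for prediction in this bare algebraic setting. In the G\"obel--May construction three of the submodules $M^0,M^1,M^2$ are written down explicitly (coordinate halves and a diagonal, as you anticipate), and the fourth $M^3$ is defined directly from a rigid combinatorial system of size~$\l$ manufactured by the Elevator together with the $\le\l$ generators of $A$; the proof that every $\va\in\bEnd_R\bM$ is scalar is then a direct computation exploiting that rigidity, rather than a stage-by-stage exclusion along an enumeration of endomorphisms. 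Your freeness bookkeeping for $M^k$ and $M/M^k$ is in the right spirit, but the ``engine'' paragraph would need to be rewritten to reflect how the Elevator actually operates.
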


The categories of $R_4$-modules over any commutative ring $R\ne 0$ satisfy the axioms from Section~\ref{Axi} and are important examples of $\rc$ categories , where B1, B2 and B3 are a consequence of Theorem~\ref{closedsub}. Thus Theorem~\ref{axThm} applies and we get the following

\begin{corollary}\label{corollary-r4} If $R\ne 0$ is any ring, there is an almost full embedding of the category of graphs into the category of $R_4$-modules. This also holds, when $R=\{0,1\}$.
\end{corollary}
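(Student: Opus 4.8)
The plan is to verify that $\rc = R_4$-{\bf Mod} satisfies Axioms A1--A4 and that the $R_4$-module produced by Theorem~\ref{closedsub} satisfies B1--B3; the corollary is then immediate from Main Theorem~\ref{axThm}, and $R=\{0,1\}=\mathbb{F}_2$ is simply the instance of a commutative ring $R\ne 0$. For A1, $\bHom_R({\bf M},{\bf M'})$ is by definition the $R$-submodule of $\Hom_R(M,M')$ of maps sending each $M^k$ into ${M'}^k$, and composition is bilinear because it is so in $\Hom_R$. For A2 and A3, kernels, cokernels and direct sums are computed on the underlying modules and equipped with the evident induced distinguished submodules (for $\sigma:{\bf M}\to{\bf M'}$ take $M^k\cap\ker\sigma$ in the kernel and the images of the ${M'}^k$ in the cokernel, and take componentwise sums for direct sums), so $\rc$ is preabelian with all direct sums. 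For A4, the underlying-set functor sends ${\bf M}$ to the underlying set of $M$; directed colimits in $\rc$ are formed componentwise on the underlying modules, so this functor preserves them (A4(i)), and since a monomorphism in a preabelian category has zero kernel object, it is injective on underlying sets (A4(ii)).

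Next I would fix $A$ to be the free $R$-algebra on the morphisms of $\Gamma$ together with $1$, as in Section~\ref{Axi}, and choose an infinite cardinal $\l$ at least the number of its generators. Applying Theorem~\ref{closedsub} to this $A$ and $M=\bigoplus_\l A$ gives an $R_4$-module ${\bf M}=(M,M^0,M^1,M^2,M^3)$ with $A=\bEnd_R{\bf M}$ and with $M$ free of rank $\l$ as a right $A$-module. Axiom B1 is exactly $A=\bEnd_R{\bf M}$, once one notes that the categorical hom $\Hom_R(M,M)$ in B1 is, in $R_4$-{\bf Mod}, the restricted hom $\bHom_R({\bf M},{\bf M})=\bEnd_R{\bf M}$. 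Writing $M=\bigoplus_{i<\l}e_iA$ and realizing $A$ as the summand $e_0A$, the $\Gamma$-purity required in B2 reduces, using $A\varphi\subseteq A$, to the coordinatewise identity $e_0A\cap\bigoplus_i e_i(A\varphi)=e_0(A\varphi)$; because $M$ is genuinely free over $A$ no completion argument like the one in Remark~\ref{remark-a-purity} is needed.

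For B3, let $\varphi:C\to D$ be a monomorphism in $\Gamma$. Then $\varphi$ is right-cancellable, so distinct basis maps $\sigma$ with target $C$ give distinct composites $\sigma\varphi$ in the basis of $A$; together with the freeness $M=\bigoplus_\l A$ this shows that right multiplication by $\varphi$ is injective on $M\id_C$, which is the monomorphism demanded by B3. Here the abstract splitting $M\cong M\id_C\oplus M(1-\id_C)$ from Remark~\ref{remark-m-splits} ensures that $M\id_C$ is again an object of $R_4$-{\bf Mod} and that $G\varphi$, being right multiplication, automatically respects the distinguished submodules, so the distinguished-subspace structure never has to be tracked by hand.

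With A1--A4 and B1--B3 verified, Main Theorem~\ref{axThm} supplies the functor $G$ and the natural isomorphisms $\gamma:R[\Hom_{\g}(X,Y)]\to\Hom_R(GX,GY)$, i.e. an almost full embedding of $\g$ into $R_4$-{\bf Mod}, the case $R=\{0,1\}$ included. I do not expect a genuine obstacle: the real content is already in Theorem~\ref{closedsub}. The only points needing care are (a) identifying the categorical hom in $R_4$-{\bf Mod} with $\bHom_R$ so that B1 reads off directly, and (b) the short checks of B2 and B3, both of which are easy precisely because Theorem~\ref{closedsub} delivers an honest free module $M=\bigoplus_\l A$ rather than one merely trapped between $A$ and its completion; the remaining delicacy is only bookkeeping with the left/right conventions fixed in the introduction.
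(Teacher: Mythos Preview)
Your proposal is correct and follows exactly the paper's approach: verify that $R_4$-{\bf Mod} satisfies A1--A4, use Theorem~\ref{closedsub} to produce ${\bf M}$ with $A=\bEnd_R{\bf M}$, and read off B1--B3 (the paper compresses all of this into a single sentence). One small slip: a monomorphism $\varphi$ is \emph{left}-cancellable, not right-cancellable, but your argument for B3 uses precisely left-cancellability (distinct $\sigma$ with target $C$ give distinct $\sigma\varphi$), so the content is fine.
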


This result also gives an immediate almost full embedding of graphs into the category of $R$-modules, if we assume that $R$ has at least $4$ comaximal primes $p$ with $\bigcap_{n<\omega} p^nR=0$. In this case we obtain a realization theorem for $R$-algebras as Theorem~\ref{cotfreethm} (replacing cotorsion-free by being $p$-reduced for four primes), see \cite[p. 639, Corollary 23.6, Vol. 2]{GT}. Thus a corresponding embedding corollary will hold.

\begin{corollary}\label{4primes} Let $R$ be a commutative ring with $4$ comaximal primes $p$ such that $\bigcap_{n<\omega} p^nR=0$. Then there is an almost full embedding of the category of graphs into the category of $R$-modules.\end{corollary}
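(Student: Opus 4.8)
The plan is to run the machine of Main Theorem~\ref{axThm} in the category of $R$-modules exactly as in the proof of Corollary~\ref{corollary-result-black-box}, but feeding it the four-primes realization theorem \cite[Corollary 23.6, p.~639, Vol.~2]{GT} in place of the cotorsion-free one, Theorem~\ref{cotfreethm}. Conceptually, the four comaximal primes $p_0,p_1,p_2,p_3$ with $\bigcap_{n<\omega}p_i^nR=0$ take over the role played by the four distinguished submodules in Corollary~\ref{corollary-r4}: each prime carries an independent $p_i$-adic topology, and comaximality makes these topologies sufficiently independent to encode the wild (universal) representation type of four subspaces inside a single $R$-module. Since the real work is done by the cited realization theorem, the task reduces to checking that the free $R$-algebra $A$ of Section~\ref{Axi} satisfies its hypotheses and that the module it produces satisfies Axioms B1--B3.

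Concretely, I would fix $\kappa$ as in the Black Box application, let $\Gamma$ be the full subcategory of graphs of size $<\kappa$, and form the free $R$-algebra $A$ on $\mathrm{Mor}\,\Gamma\cup\{1\}$. As $R$ is $p_i$-reduced for each of the four primes and $A$ is free over $R$, the algebra $A$ inherits the required reducedness and so meets the hypotheses of \cite[Corollary 23.6, Vol.~2]{GT}. Applying that theorem yields an $R$-module $M=G_\emptyset$ with $\End_R M\cong A$, sandwiched as $F\subseteq M\subseteq\Fhat$ with $F=\bigoplus_{\alpha<\lambda}e_\alpha A$ free and $\Fhat$ the completion of $F$ in the topology generated by the four primes.

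The axioms are then verified as in the cotorsion-free case. Axioms A1--A4 are immediate for $R$-Mod, and Axiom B1 is just $\End_R M\cong A$. For B2, the inclusion $A\subseteq\Ahat$ is an $A$-retract of $F\subseteq\Fhat$, so every $m\in M$ has a unique expansion over the basis of $A$ convergent in the four-primes topology; comparing the expansions of $m\varphi$ on the two sides, exactly as in Remark~\ref{remark-a-purity}, gives $A\cap M\varphi=A\varphi$. For B3, each $\id_C$ ($C\in\Gamma$) lies in $A$, a monomorphism $\varphi\colon C\arr D$ acts as scalar multiplication by some $a\in A$ defined on $F$ and extended canonically to $M$, and one checks that $a\restr M\id_C$ stays one-to-one. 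With B1--B3 established, Main Theorem~\ref{axThm} delivers the almost full embedding.

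The genuine difficulty --- and where the number four is indispensable --- is the four-primes realization theorem itself, i.e.\ the $R$-module analogue of Theorem~\ref{cotfreethm} obtained by replacing cotorsion-freeness with $p$-reducedness at four comaximal primes. This is precisely the input I would borrow from \cite[Corollary 23.6, Vol.~2]{GT}; relative to it, verifying the axioms and invoking the Main Theorem is routine and strictly parallels the Black Box argument already carried out above.
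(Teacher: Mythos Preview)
Your proposal is correct and matches the paper's own (very terse) argument: the paper likewise invokes the four-primes realization theorem \cite[p.~639, Corollary~23.6, Vol.~2]{GT} as a drop-in replacement for Theorem~\ref{cotfreethm}, notes that it yields a realization $\End_R M\cong A$ with the same sandwich structure, and declares that ``a corresponding embedding corollary will hold'' by the same verification of Axioms B1--B3 as in the Black Box application. You have simply spelled out the details the paper leaves implicit, including the conceptual link between the four primes and the four distinguished submodules of Corollary~\ref{corollary-r4}.
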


\subsection{An Appendix on Absolute Constructions}
The following (large) strongly inaccessible cardinal is closely related to absolute properties, see also \cite{J} for the definition.

\begin{definition} \label{erdo}Let $\k(\omega)$ denote the first $\omega$-{\sl Erd\H{o}s cardinal}.
This is defined as the  smallest cardinal $\k$ such
 that $\k \arr (\omega)^ {<\omega}$, i.e. for every
function $f$  from the finite subsets of $\k$ to 2 there exist an
infinite subset $X \subset \k$ and a function $g: \omega \arr 2$ such
that $f(Y) = g(|Y|)$ for all finite subsets $Y$ of $X$.
\end{definition}

Shelah in 1982 characterized this cardinal showing the existence of certain tree-embeddings if and only their cardinalities are bounded by $\k(\omega)$, we refer to \cite{S} for the proofs. The key of his proof is a construction of absolute objects which do not depend on generic extension of the given universe of set theory. This was exploited, using \cite{S} and \cite{trnkova-book}, in \cite{DGP} and shifted to graphs, adding the absoluteness condition to the known result on graphs.
 \begin{proposition} If $M$ is a monoid, then there exists a graph $G$ with an isomorphism $\End G\cong M$ between the endomorphism monoid $\End(G)$ and $M$ which holds absolutely, so is remains an isomorphism in any in any  generic extension of the given universe of set theory if and only if $\size{M},\size{G} < \k(\omega)$.\end{proposition}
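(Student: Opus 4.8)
The plan is to reduce the biconditional to two essentially independent inputs and combine them: the \emph{universality} of the category of graphs and Shelah's theory of \emph{absolute rigidity}, whose exact threshold is the Erd\H{o}s cardinal $\k(\w)$ of Definition~\ref{erdo}. By \cite{trnkova-book} the category of graphs is universal, so every monoid $M$ is realized as $\End G$ for some graph $G$ by means of a purely combinatorial encoding of the multiplication table of $M$; because this encoding is first order in $M$ and in whatever auxiliary rigid data is used, its defining properties are absolute between $V$ and any generic extension $V[\mathbb{G}]$. What is \emph{not} automatic is that $\End G$ fails to acquire new endomorphisms after forcing, and it is precisely at this point that $\k(\w)$ enters, through Shelah's analysis in \cite{S} and its transport to graphs in \cite{DGP}.

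For the implication ``$\Leftarrow$'', I would assume $\size{M}<\k(\w)$ and augment the standard graph encoding of $M$ with Shelah's absolutely rigid gadgets: by \cite{S} there exist, for every cardinal $<\k(\w)$, structures that are rigid (carry only the identity endomorphism) in \emph{every} generic extension, the minimality of $\k(\w)$ with respect to the partition property $\k(\w)\arr(\w)^{<\w}$ guaranteeing that no infinite set of indiscernible vertices can be forced below $\k(\w)$. Decorating the skeleton that codes the multiplication of $M$ with these gadgets pins the structure down absolutely, so that in $V$ and in every $V[\mathbb{G}]$ the only graph endomorphisms of $G$ are those arising from elements of $M$. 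Pushing this through the full embedding of \cite{trnkova-book} yields a graph $G$ with $\End G\cong M$ holding absolutely, and keeping every gadget of size $<\k(\w)$ keeps $\size{G}<\k(\w)$, since $\k(\w)$ is strongly inaccessible.

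For ``$\Rightarrow$'', suppose $\End G\cong M$ holds absolutely and, for contradiction, $\size{G}\ge\k(\w)$. Then I would apply $\k(\w)\arr(\w)^{<\w}$ to the finitary colourings recording the induced structure of $G$ on finite tuples of vertices, extracting an infinite set of order-indiscernible vertices; a suitable forcing then turns a nontrivial self-map respecting this indiscernible pattern (a shift along the indiscernibles) into a genuine graph endomorphism of $G$ lying outside the image of $M$, so $\End G$ strictly grows in $V[\mathbb{G}]$ and the isomorphism is not absolute. Hence $\size{G}<\k(\w)$, and then $\size{M}=\size{\End G}\le 2^{\size{G}}<\k(\w)$, again by strong inaccessibility.

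The hard part will be the sufficiency direction, and specifically the construction and verification of absolutely rigid gadgets of \emph{all} cardinalities below $\k(\w)$, together with the requirement that weaving them into the universal graph coding of \cite{trnkova-book} introduces no spurious endomorphism in \emph{any} forcing extension while still preserving fullness of the embedding. This is exactly the content isolated by Shelah in \cite{S}, and the main labour of \cite{DGP} is to transport it faithfully into the category of graphs; I would treat both as black boxes and concentrate on the two bookkeeping checks, namely that the decoration leaves the endomorphism monoid equal to $M$ and that absoluteness of the realization is inherited by the resulting graph.
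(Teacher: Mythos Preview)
Your proposal is correct and aligns with the paper's treatment: the paper does not give an independent proof of this proposition but simply attributes it to \cite{DGP}, obtained by combining Shelah's characterization of $\k(\omega)$ via absolutely rigid tree-embeddings \cite{S} with the universality of graphs from \cite{trnkova-book}. Your sketch spells out exactly this combination---absolutely rigid gadgets below $\k(\omega)$ for sufficiency, and the partition relation $\k(\omega)\to(\omega)^{<\omega}$ producing indiscernibles (hence forceable extra endomorphisms) for necessity---so you are reconstructing the argument the paper cites rather than diverging from it.
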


Thus it is reasonable for problems concerning absolute properties of graphs to restrict to objects of size $< \k(\omega)$. Fortunately we have a parallel result on
absolute $R_4$-modules derived from \cite{GS2,FuG}; see details in \cite[Chapter 22, Vol. 2]{GT}, which matches to graphs of size $< \k(\omega)$.

\begin{corollary} \label{abs4un}
Let $\l<\k(\omega)$ be any infinite cardinal and $A$ any faithful algebra over a commutative ring $R \ne 0$,
such that $A$ has fewer than $\l$ generators. Then there exists a
family of free right $A_4$-modules
$${\bM}_U  = (M_U,  M_U^0,M_U^1,
M_U^2, M_U^3) \qquad (U\subseteq \l),$$ where $M,
M_U^j, M/M_U^j$ are free $A$-modules of rank $\l$ for all $0\le j\le 3$,
such that
$$\bHom_R (\bM_U, \bM_V) = \left\{
\begin{array}{rr}
A,  \text{  if   }\quad  U \subseteq V\\
0, \text{  if   }\quad  U \not\subseteq V
\end{array} \right.
$$
holds in any generic extension of the universe.
\end{corollary}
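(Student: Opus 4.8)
The plan is to promote the single rigid $R_4$-module of Theorem~\ref{closedsub} to an \emph{absolutely} rigid family $\{\bM_U\mid U\subseteq\l\}$, with the inclusion lattice of $\l$ encoded into the module maps, and to extract absoluteness from the partition property of the Erd\H{o}s cardinal $\k(\omega)$ of Definition~\ref{erdo}. I would start from the prototype: Theorem~\ref{closedsub} (via Shelah's Elevator) equips $M=\bigoplus_\l A$ with an $R_4$-structure $\bM$ for which $\bEnd_R\bM=A$ and all of $M^k$, $M/M^k$ are free $A$-modules of rank $\l$. What remains is to parametrise this by subsets of $\l$ so that the mutual $\bHom$'s read off the relation $U\subseteq V$.

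Next I would build the family inside one ambient structure, exactly as the $G_X$ are built from the transcendentals $w_x$ in the proofs of Theorems~\ref{genCo} and~\ref{cotfreethm}: attach to each coordinate $x<\l$ an absolutely rigid combinatorial gadget (a rigid branch or tree of size $<\k(\omega)$ in the sense of \cite{S}) and let $\bM_U$ be the $R_4$-submodule determined by the gadgets indexed by $U$. This is precisely the construction carried out in \cite{GS2} and \cite{FuG}, detailed in \cite[Chapter 22, Vol. 2]{GT}; I would cite it for the existence of the family with $M$, $M_U^j$, $M/M_U^j$ free of rank $\l$, and for the fact that $U\subseteq V$ yields an honest $R_4$-inclusion $\bM_U\subseteq\bM_V$ and hence the canonical copy of $A$ inside $\bHom_R(\bM_U,\bM_V)$.

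The substantive step is rigidity. When $U\subseteq V$ one argues as in Theorem~\ref{closedsub}: a homomorphism $\varphi$ sends each generator to $e_i\varphi=e_ia_i$, the $R_4$-constraints and the freeness of $M/M^k$ force a single $a\in A$ with $e_i\varphi=e_ia$ for all $i<\l$, and since the $e_i$ generate $M$ we get $\varphi=a$, so $\bHom_R(\bM_U,\bM_V)=A$. When $U\not\subseteq V$ one picks $x\in U\setminus V$ and shows that the gadget attached at coordinate $x$ has no image available inside $\bM_V$, which collapses $a$ and forces $\bHom_R(\bM_U,\bM_V)=0$; this is the analogue of the step ``$Aw_x\cap H_{X'}=0$'' in the proof of Theorem~\ref{genCo}.

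The hard part, and the whole reason $\k(\omega)$ appears, is that this vanishing must survive every forcing: the earlier theorems obtained $\bHom=0$ from algebraic independence of transcendentals or from Black Box combinatorics, and a generic extension could in principle add new homomorphisms and destroy rigidity. I would instead deduce the vanishing from $\k(\omega)\arr(\omega)^{<\omega}$. A nonzero map in any extension would produce an embedding of one rigid gadget into another that is combinatorially forbidden for structures of size $<\k(\omega)$; the crucial point is that, by Shelah's characterisation in \cite{S}, the (non-)existence of such an embedding is equivalent to a statement about tree-embeddings whose truth value is fixed once the cardinalities lie below $\k(\omega)$, hence absolute. Therefore both the identity $\bHom_R(\bM_U,\bM_V)=A$ for $U\subseteq V$ and the vanishing for $U\not\subseteq V$ persist in every generic extension, which is the assertion of the corollary.
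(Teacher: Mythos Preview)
The paper does not supply an in-text proof of this corollary; it is quoted as a known result, with the sentence preceding it pointing to \cite{GS2,FuG} and to \cite[Chapter~22, Vol.~2]{GT} for the construction and details. Your sketch is a faithful outline of what those references do: one replaces the transcendentals $w_x$ (resp.\ the Black Box predictions) of Theorems~\ref{genCo} and~\ref{cotfreethm} by Shelah's absolutely rigid trees of size $<\k(\omega)$ from \cite{S}, plants one at each coordinate $x<\l$, lets $\bM_U$ be the $R_4$-submodule determined by the trees indexed by $U$, and then runs the ``$e_i\varphi=e_ia$'' computation from Theorem~\ref{closedsub} for the positive case and the ``no image available for the gadget at $x\in U\setminus V$'' argument for the vanishing case. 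The absoluteness is exactly the point you isolate: the non-embeddability of these trees is a $\Sigma^1_2$-type statement whose truth below $\k(\omega)$ is fixed by \cite{S}, hence preserved under forcing. So your proposal and the paper's (cited) proof coincide.
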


By the above it is now clear that the absolute $R_4$-modules of size $<\k(\omega)$ satisfy our axioms from Section~\ref{Axi}. Hence, combining our knowledge on absolute graphs and absolute $R_4$-modules we derive a final embedding theorem for graphs of size $<\k(\omega)$. We say that a graph is absolute, if its endomorphism monoid remains the same in any generic extension of the universe. Similarly we say that an $R_4$-module $\bM$ is absolute, if $\bEnd_R\bM$  remains the same in any generic extension of the universe.

\begin{corollary} Let $R\ne 0$ be any ring of size $<\k(\omega)$. Then there is an almost full embedding of the category of absolute graphs $<\k(\omega)$ into the category of absolute $R_4$-modules.\end{corollary}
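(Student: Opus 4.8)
The plan is to feed the absolute realization theorem, Corollary~\ref{abs4un}, into Main Theorem~\ref{axThm}, and then to argue that the resulting embedding survives passage to generic extensions. Set $\kappa=\k(\omega)$; being strongly inaccessible it is regular and uncountable, so it has uncountable cofinality and satisfies $\kappa=\kappa^{\aln}$, meeting the standing convention of Section~\ref{Axi}. Let $\rc$ be the category of absolute $R_4$-modules of size $<\k(\omega)$, which, as already noted, satisfies A1--A4, let $\Gamma$ consist of one representative of each isomorphism class of absolute graphs of size $<\k(\omega)$, and let $A$ be the free $R$-algebra on $\mathrm{Mor}\,\Gamma$. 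Because $\size{R}<\k(\omega)$ and every graph Hom-set between objects of $\Gamma$ has size $<\k(\omega)$, each corner $\id_X A\,\id_Y\cong R[\Hom_\g(X,Y)]$ is generated by fewer than $\k(\omega)$ elements; the only cardinality point to watch is that $\size{A}$ itself is merely $\le\k(\omega)$, which I address below by working inside bounded subcategories.

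I would first verify the Axioms of type~B for $M=\bM_\emptyset$, mirroring the derivation of Corollary~\ref{corollary-r4} from Theorem~\ref{closedsub}. Axiom~B1, $A\cong\bEnd_R M$, is the instance $U=V=\emptyset$ of the Hom-formula of Corollary~\ref{abs4un}. Axiom~B2 ($\Gamma$-purity) follows from the freeness of $M$ and of all $M^k$, $M/M^k$ over $A$, via the support-and-uniqueness computation of Remark~\ref{remark-a-purity}. Axiom~B3 is the remark that a monomorphism $\va:C\arr D$ of $\Gamma$ acts on $M$ as scalar multiplication by an element of $A$ and stays one-to-one on the free summand $M\,\id_C$. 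Since every object of $\Gamma$ has size $<\kappa$, the functor produced by Main Theorem~\ref{axThm} takes the simple form $GX=M\,\id_X$, and the natural isomorphism $\gamma:R[\Hom_\g(X,Y)]\overset{\cong}{\to}\bHom_R(GX,GY)$ is delivered on the nose by Lemma~\ref{lemma-right-multiplication} together with Remark~\ref{remark-colimits}; in particular none of the limit machinery of Lemma~\ref{lemma-limits-zs} is required here.

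The heart of the argument is the upgrade to \emph{absolute} objects. The decisive feature of Corollary~\ref{abs4un} is that its Hom-formula holds in \emph{every} generic extension, so Axioms~B1 and~B2 persist in each extension $V[g]$, and the proof of Lemma~\ref{lemma-right-multiplication} reproduces there the identification $\bHom_R(GX,GY)^{V[g]}\cong R[\Hom_\g(X,Y)^{V[g]}]$ (note that $A$, $M$, and the idempotents $\id_X$ are fixed sets whose structure is untouched by forcing). On the graph side, since $X$ and $Y$ are absolute graphs of size $<\k(\omega)$ no new homomorphism $X\arr Y$ is created by forcing, so $\Hom_\g(X,Y)^{V[g]}=\Hom_\g(X,Y)^{V}$; this is the graph counterpart of the absoluteness results of \cite{S} underlying the Proposition on absolute graphs, as transported to graphs in \cite{DGP}. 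Combining the two, $\bHom_R(GX,GY)$ is computed by the same free module $R[\Hom_\g(X,Y)]$ in $V$ and in every $V[g]$; taking $X=Y$ shows that $\bEnd_R(GX)$ is preserved, so each $GX$ is an absolute $R_4$-module, and the same computation shows $\gamma$ is an absolute natural isomorphism. Thus $G$ restricts to an almost full embedding of absolute graphs of size $<\k(\omega)$ into absolute $R_4$-modules.

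I expect the set-theoretic step above to be the main obstacle: one must guarantee that forcing adds neither new homomorphisms between the graphs nor new endomorphisms of the modules. The module half is handed to us outright by the ``in any generic extension'' clause of Corollary~\ref{abs4un}, but the graph half rests on the partition property $\k(\omega)\arr(\omega)^{<\omega}$ and the absolute tree-embedding construction of \cite{S}, and extending it from endomorphism monoids to all Hom-sets $\Hom_\g(X,Y)$ is what needs the most care. A secondary, purely cardinal-arithmetic point is the boundary case $\size{A}=\k(\omega)$: to keep each application of Corollary~\ref{abs4un} within its hypothesis I would exhibit $G$ as the colimit over $\mu<\k(\omega)$ of the embeddings obtained by restricting $\Gamma$ to graphs of size $\le\mu$, for which the spanning algebra $A_\mu$ has fewer than $\k(\omega)$ generators, coherence of the chosen realizations being the one detail to pin down.
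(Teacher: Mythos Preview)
Your approach is the paper's: verify the axioms of Section~\ref{Axi} using the absolute realization Corollary~\ref{abs4un} in place of Theorem~\ref{closedsub}, then invoke Main Theorem~\ref{axThm}. The paper's own proof is literally the one clause ``by the above it is now clear\ldots'', so your more careful unpacking---including the two flagged subtleties about the passage from absolute endomorphism monoids to absolute Hom-sets and about the boundary $\size{A}=\k(\omega)$---already goes beyond what the paper spells out; neither point is resolved there either.
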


\noindent Address of the authors:

\vspace{7pt}

\noindent
R\"udiger G\"obel \\ Fakult\"at f\"ur Mathematik,\\
Universit\"at Duisburg-Essen,\\ Campus Essen, 45117 Essen, Germany \\
{\small e-mail: ruediger.goebel@uni-due.de}

\vspace{7pt}

\noindent
 Adam J. Prze\'zdziecki\\
 The Faculty of Applied Informatics and Mathematics, \\
 Warsaw University of Life Sciences -- SGGW, \\
 ul. Nowoursynowska 159, 02-776 Warszawa, Poland \\
 {\small e-mail: adam\_przezdziecki@sggw.pl}

\end{document}